\newcommand{\R}{\mathbb{R}}
\newcommand{\N}{\mathbb{N}}
\newcommand{\Q}{\mathbb{Q}}
\theoremstyle{plain}
\newtheorem{thm}{Theorem}[section]
\newtheorem{lemma}[thm]{Lemma}
\newtheorem{cor}[thm]{Corollary}
\newtheorem{que}[thm]{Question}
\newtheorem{claim}[thm]{Claim}
\theoremstyle{definition}
\theoremstyle{remark}
\newtheorem{rem}[thm]{Remark}
\newcommand{\subjclass}[2][2020]{%
 \let\@oldtitle\@title%
 \gdef\@title{\@oldtitle\footnotetext{#1 \emph{MSC classes.} #2}}%
}
\newcommand{\keywords}[1]{%
 \let\@@oldtitle\@title%
 \gdef\@title{\@@oldtitle\footnotetext{\emph{Keywords:} #1.}}%
}
\title{On noncontinuous bisymmetric strictly monotone operations}
\author{Gergely Kiss}
\keywords{bisymmetry, associativity, reflexivity, symmetry, strict monotonicity, binary and $n$-ary operations}
\subjclass[2020]{39B22, 26A15}
\date{}
\begin{document}

\maketitle

\begin{abstract}
We construct bisymmetric, strictly increasing binary operations on real intervals which are not continuous. This answers a natural question in the theory of bisymmetric and mean-type operations by showing that continuity may fail for non-reflexive operations of the form
\[
F(x,y)=f^{-1}(\alpha f(x)+\beta f(y)),
\]
where $\alpha,\beta>0$ with $\alpha+\beta\neq1$. Our construction is based on a Cantor-type perfect set whose elements are linearly independent over a countable subfield of $\R$, which allows the generating function $f$ to map an interval bijectively onto a nowhere dense fractal-type set.

As a consequence we obtain a noncontinuous associative and strictly increasing operation on an interval. We also extend the construction to the multivariate case. In the opposite direction we prove that if a symmetric bisymmetric strictly increasing operation is reflexive at two points of an interval, then it is automatically continuous on the segment between them and coincides there with a quasi-arithmetic mean.
\end{abstract}


\section{Introduction}

The investigation of functional equations involving binary operations has
a long history, dating back to the foundational work of J.~Acz\'el and related studies. Among these, a central role is played by the \emph{bisymmetric equation}
\begin{equation}\label{eq:bisym}
    F(F(x,y),F(u,v)) = F(F(x,u),F(y,v)) \qquad (x,y,u,v\in I),
\end{equation}
where $I$ is an interval of the real line and $F:I^2\to I$ is a binary
operation. The bisymmetry equation is deeply connected to the theory of
means and, more generally, to the study of quasi-arithmetic and quasi-sum
type functions.

\medskip
\noindent
\textbf{Historical background.}
The notion of bisymmetry emerged in Acz\'el’s classical work
\cite{Aczel1948}, where it was shown that \eqref{eq:bisym} provides the
functional core of the characterization of binary \emph{quasi-arithmetic means}.
Acz\'el’s approach represented a conceptual shift compared to earlier
multivariate mean characterizations by Kolmogoroff \cite{Kolmogoroff1930},
Nagumo \cite{Nagumo1930}, and de~Finetti \cite{deFinetti1931}, as it reduced
the problem to a single binary operation satisfying a small number of natural
axioms. In this setting, a quasi-arithmetic mean is defined by
\[
   F(x,y)=f^{-1}\!\left(\frac{f(x)+f(y)}{2}\right), \qquad x,y\in I,
\]
where $f:I\to\mathbb{R}$ is a continuous strictly monotone function.
Acz\'el proved that every continuous, symmetric, strictly monotone, and
bisymmetric reflexive operation on an interval is of this form
(Theorem~\ref{Aczel1} below).

In later developments, bisymmetry was studied not only for means but also
for more general \emph{weighted} operations of additive form, that we will call
\emph{weighted quasi-sums}.  In the continuous setting, these are characterized by
Acz\'el and Dhombres in \cite{Aczel1989}; see Theorem~\ref{Aczel3} below.
Roughly speaking, any continuous, strictly monotone, bisymmetric operation
on an interval $I$ is (after a suitable change of variable) of the form
\[
  F(x,y)=f\big(Af^{-1}(x)+Bf^{-1}(y)+C\big),
\]
with $A,B,C\in \R$ where $A\cdot B\neq 0$ and $f:J\to I$ is a continuous bijection between intervals $I,J$, and the reflexive case corresponds to a normalized choice
of parameters with $A+B=1$ and $C=0$.

Beyond the foundational results of Kolmogoroff, Nagumo, de~Finetti and Acz\'el,
quasi-arithmetic and related means have been studied intensively from many points
of view, including Gauss composition and Matkowski--Sut\^o type equations,
equality and comparison problems, invariance and embeddability, and operator
versions; see, among many others,
Burai~\cite{Burai2013c},
Dar\'oczy--P\'ales~\cite{Daroczy2002},
Dar\'oczy--Maksa~\cite{Daroczy2013}, Duc et al.~\cite{Duc2020}, Fodor-Marichal \cite{fodor1997}, G{\l}azowska--Jarczyk--Jarczyk~\cite{Glazowska2020},
Jarczyk--Jarczyk~\cite{Jar2018},
Kiss--P\'ales~\cite{Kiss2018},
Maksa~\cite{Ma1999}, Maksa-Mokken-M\"unich ~\cite{MaMoMu2000},
Matkowski-P\'ales~\cite{MaPa2015},
Nagy--Szokol~\cite{Nagy2019},
P\'ales~\cite{Pales2011},
P\'ales--Pasteczka~\cite{PP2025},
Pasteczka~\cite{Pasteczka2020}.

\medskip
\noindent
\textbf{Without continuity assumption.}
In Acz\'el’s original proofs, the continuity of $F$ played a crucial role,
and it was unclear whether this assumption could be removed.
This step was achieved in \cite{BKSZ2021}, where Burai, Kiss, and
Szokol proved that continuity follows automatically from bisymmetry,
symmetry, strict monotonicity, and reflexivity.
In other words, every reflexive, partially strictly increasing, symmetric, and bisymmetric operation on a proper interval is automatically continuous
and therefore quasi-arithmetic (Theorem~\ref{T:bisymmetryimpliescontinuity}
below).  This established the remarkable \emph{regularity-improving} nature
of bisymmetry in the symmetric, reflexive case.

On the other hand, in the non-symmetric situation, our understanding is
much more limited.  For instance, even for a bisymmetric, reflexive, strictly
increasing operation $F$, simple monotonicity-type implication (see \eqref{eq:sign-implication}) is not known to be valid, which holds for
weighted quasi-arithmetic means. Nevertheless,
it was shown in \cite{BKSZ2023} that if bisymmetry, strict monotonicity,
and reflexivity are combined with symmetry at a \emph{single} pair
$(x,y)$ with $x\neq y$, then global symmetry and continuity follow; see
Theorem~\ref{thm:one-symmetric-pair} below.

\medskip
\noindent
\textbf{Our goals.}
The present paper aims to clarify the role of reflexivity in the
regularizing effect of bisymmetry by studying bisymmetric and strictly
monotone binary operations \emph{without} assuming reflexivity.
Our main contributions can be summarized as follows.

\begin{itemize}
  \item We construct, for every pair $0<\alpha\le\beta\in \R$ with
  $\alpha+\beta\neq 1$, a bisymmetric, strictly increasing (and in the
  case $\alpha=\beta$ also symmetric) binary operation
  \[
      F(x,y)=f^{-1}(\alpha f(x)+\beta f(y))
  \]
  on a suitable interval $I$, which is not continuous (here $f$ is a strictly increasing bijection between $I$ and a suitable set $H'$). More precisely,
  every section $F(x_0,\cdot)$ and $F(\cdot,y_0)$ is discontinuous for
  every $x_0,y_0\in I$.  This shows that bisymmetry and strict monotonicity do \emph{not} imply continuity in general, and that
  continuity cannot be dropped from Acz\'el’s quasi-sum characterization
  ({\bf Theorem~\ref{Aczel3}.(2)}).

  \item As a special case $\alpha=\beta=1$, we obtain an associative,
  strictly increasing operation $F:I^2\to I$ which is
  discontinuous, giving a negative answer to the analogous question for
  Acz\'el’s theorem on associative quasi-sums ({\bf Theorem \ref{thm:assoc-counterexample}}).

  \item In a modified version of the construction we can additionally
  arrange \emph{one-point reflexivity}: there exists $c\in \R$ with
  $F(c,c)=c$ so that $F:(I\cup\{c\})^2\to I\cup\{c\}$ is a bisymmetric (and associative), symmetric, strictly increasing binary operation and not continuous ({\bf Remark \ref{rem:onepoint}}). Together with our two-point reflexivity result ({\bf Theorem~\ref{T:refl2p}}), this shows that a single reflexive point is
  far too weak to enforce continuity, while reflexivity at two points,
  combined with symmetry and bisymmetry, already implies continuity and a
  quasi-arithmetic representation on the whole segment between them.

  \item We extend the construction and the structural results to the
  higher-dimensional case ({\bf Theorem \ref{thm:n-ary-counterexample}}), for operations of the form
  \[
    F(x_1,\dots,x_n)
      = f^{-1}\!\Big(\sum_{i=1}^n \alpha_i f(x_i)\Big),
  \]
  in the non-reflexive regime $\sum_i\alpha_i\neq 1$ and $\alpha_i>0$ and we discuss the
  implications and remaining open questions for $n$-ary bisymmetric
  operations.
\end{itemize}

In particular, our discontinuous examples show that, outside the reflexive setting, bisymmetry and strict monotonicity do not guarantee any regularity beyond mere monotone behavior of sections.
At the same time, the two-point reflexivity theorem reveals a strong
local rigidity phenomenon (in the symmetric case, reflexivity at two
points forces a quasi-arithmetic structure on the interval spanned by these points).

\medskip
\noindent
The paper is organized as follows.
In Section~\ref{S:preliminary} we collect the necessary definitions and
recall several classical results, including Acz\'el’s characterizations of
quasi-arithmetic means and quasi-sums, as well as the continuity results
from \cite{BKSZ2021,BKSZ2023}.
Section~\ref{sec:H0} contains our main results on constructing discontinuous,
non-reflexive bisymmetric (and, respectively, associative) operations, and
their higher-dimensional extensions are presented in Section~\ref{sec:multi}.
Section~\ref{sec:two-point-refl} is devoted to the local reflexivity
principle at two points.
Finally, Section~\ref{sec:conclusion} collects concluding remarks and open
problems.
\section{Preliminaries}\label{S:preliminary}

Throughout the paper $I\subseteq\mathbb{R}$ denotes a proper interval, that is,
an interval with nonempty interior and at least two points.
We consider binary operations $F:I^2\to I$ satisfying certain natural
algebraic and regularity properties that we recall below.

\subsection*{Basic definitions}

\begin{enumerate}[(i)]
    \item \textbf{Reflexivity.}
    The operation $F$ is said to be \emph{reflexive} if
    \[
        F(x,x)=x \qquad (x\in I).
    \]
    Reflexivity expresses the mean-type property that $F$ reproduces its
    argument when both variables are equal.

    \item \textbf{Partial strict monotonicity.}
    $F$ is \emph{partially strictly increasing} if for every fixed
    $x_0,y_0\in I$ the functions
    \[
        x\mapsto F(x,y_0),\qquad y\mapsto F(x_0,y)
    \]
    are strictly increasing on $I$.
    The weaker notions ``partially monotone'' and ``partially increasing''
    are defined analogously.

    \item \textbf{Symmetry.}
    $F$ is \emph{symmetric} if $F(x,y)=F(y,x)$ for all $x,y\in I$.

    \item \textbf{Bisymmetry.}
    $F$ is \emph{bisymmetric} if
    \begin{equation}\label{E:bisymmetry}
        F(F(x,y),F(u,v))=F(F(x,u),F(y,v))
        \qquad (x,y,u,v\in I).
    \end{equation}

   \item \textbf{Associativity.}
    $F$ is \emph{associative} if
    \begin{equation}\label{E:associativity}
        F(F(x,y),z)=F(x,F(y,z))
        \qquad (x,y,z\in I).
    \end{equation}


    \item \textbf{Mean-type property.}
    $F$ is a \emph{mean} if
    \[
        \min\{x,y\}\le F(x,y)\le\max\{x,y\},\qquad (x,y\in I),
    \]
    and a \emph{strict mean} if the inequalities are strict whenever
    $x\ne y$.
\end{enumerate}

\subsection*{Acz\'el's characterization theorems}

The connection between bisymmetry and the theory of means originates from the following classical result of J.~Acz\'el.

\begin{thm}[Acz\'el, 1948; quasi–arithmetic means]\label{Aczel1}
A function $F:I^2\to I$ is continuous, reflexive, partially strictly
monotone, symmetric, and bisymmetric if and only if there exists a
continuous strictly monotone function $f:I\to\mathbb{R}$ such that
\begin{equation}\label{eq:QAM}
    F(x,y)=
    f^{-1}\!\left(\frac{f(x)+f(y)}{2}\right)
    \qquad (x,y\in I).
\end{equation}
In this case, $F$ is called a \emph{quasi–arithmetic mean} generated by $f$.
\end{thm}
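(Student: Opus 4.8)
The plan is to treat the two implications separately, with essentially all of the work in the forward direction. The converse is a direct verification: if $F(x,y)=f^{-1}\bigl(\tfrac{f(x)+f(y)}{2}\bigr)$ with $f$ continuous and strictly monotone, then $F$ inherits continuity and partial strict monotonicity from $f$ and $f^{-1}$, reflexivity from $f^{-1}(f(x))=x$, and symmetry from the symmetry of $\tfrac{f(x)+f(y)}{2}$. For bisymmetry I would use the identity $f(F(x,y))=\tfrac{f(x)+f(y)}{2}$, which gives $f\bigl(F(F(x,y),F(u,v))\bigr)=\tfrac{1}{4}\bigl(f(x)+f(y)+f(u)+f(v)\bigr)$; the right-hand side is invariant under exchanging $y$ and $u$, and since $f$ is injective this is exactly \eqref{E:bisymmetry}.

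For the ``only if'' direction, I would first record that reflexivity together with partial strict monotonicity makes $F$ a strict mean: for $x<y$ one gets $x=F(x,x)<F(x,y)<F(y,y)=y$. Fixing two points $a<b$ of $I$, I would build a dyadic scale $(x_t)$ indexed by dyadic $t\in[0,1]$ by setting $x_0=a$, $x_1=b$, and, for an odd numerator, $x_{k/2^n}=F\bigl(x_{(k-1)/2^n},x_{(k+1)/2^n}\bigr)$ (well founded since $k\pm 1$ are even, so the arguments sit at a lower level). The crucial step is to prove, by induction on the level $n$, the midpoint law
\[ F(x_s,x_t)=x_{(s+t)/2}\qquad(s,t\ \text{dyadic in }[0,1]), \]
using bisymmetry to rearrange nested applications of $F$, symmetry to reorder arguments, and reflexivity to collapse equal pairs. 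This simultaneously shows that the construction is well defined, i.e.\ independent of the way a given dyadic value is reached, and strict monotonicity of $F$ forces $t\mapsto x_t$ to be strictly increasing on the dyadics.

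Next I would pass to the limit: the map $t\mapsto x_t$ is strictly increasing with dense image in $[a,b]$, and continuity of $F$ lets me extend it to a homeomorphism $\xi\colon[0,1]\to[a,b]$ satisfying $F(\xi(s),\xi(t))=\xi\bigl(\tfrac{s+t}{2}\bigr)$ for all $s,t\in[0,1]$. Writing $g=\xi^{-1}$ and substituting $u=\xi(s)$, $v=\xi(t)$ yields $F(u,v)=g^{-1}\bigl(\tfrac{g(u)+g(v)}{2}\bigr)$ on $[a,b]$, which is the desired form there. To obtain a single generator on all of $I$, I would exhaust $I$ by an increasing sequence of compact subintervals, run the construction on each, and glue the local generators: since a quasi-arithmetic generator is unique up to a positive affine transformation, the local functions can be normalized to agree on overlaps, producing a continuous strictly monotone $f\colon I\to\mathbb{R}$ representing $F$ globally.

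The main obstacle is the midpoint law: verifying that the dyadic subdivision is consistent and satisfies $F(x_s,x_t)=x_{(s+t)/2}$ is precisely where bisymmetry is indispensable, and the induction must be organized carefully so that every rearrangement of a nested expression is justified by a single application of \eqref{E:bisymmetry} together with symmetry and reflexivity. Once this combinatorial identity is established, the remaining analytic steps, namely the passage to the limit and the gluing, are routine consequences of continuity and strict monotonicity.
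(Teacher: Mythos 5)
The paper does not prove Theorem~\ref{Aczel1}: it is quoted as a classical result of Acz\'el with a reference to \cite{Aczel1948} and \cite[pp.~287--290]{Aczel1989}. Your proposal is, in outline, exactly the classical argument --- the same dyadic-scale strategy that the paper itself adapts (in the harder, non-continuous setting) in the proof of Theorem~\ref{T:refl2p}: build $x_t$ for dyadic $t$, prove the midpoint law $F(x_s,x_t)=x_{(s+t)/2}$ by induction using bisymmetry, symmetry and reflexivity, extend to a continuous strictly increasing $\xi$, and read off the quasi-arithmetic form. The converse verification and the gluing of local generators (unique up to positive affine transformations) are both standard and correct.

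One step deserves more than the single clause you give it: the density of $\{x_t : t\in\mathcal D\}$ in $[a,b]$ is asserted but not argued, and it is precisely the point where the continuity of $F$ does essential work (in the paper's Theorem~\ref{T:refl2p}, where continuity is not available, this is the delicate step (4), handled by producing uncountably many disjoint intervals). With continuity the argument is short but should be stated: if $(X,Y)$ were a gap avoided by all $x_t$, take dyadics $d_n\nearrow t^*$ and $D_m\searrow t^*$ with $x_{d_n}\to X$ and $x_{D_m}\to Y$; then $F(x_{d_n},x_{D_m})=x_{(d_n+D_m)/2}\notin(X,Y)$ for all $n,m$, while by continuity the left side converges to $F(X,Y)$, which lies strictly inside $(X,Y)$ by the strict-mean property --- a contradiction. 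With that inserted, your sketch is a complete and faithful reconstruction of Acz\'el's proof.
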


In a more general setting, continuous bisymmetric operations are
characterized as follows; see Acz\'el and Dhombres
\cite[Chap.~11]{Aczel1989}.

\begin{thm}[Acz\'el–Dhombres; quasi–sums]\label{Aczel3}
Let $I$ be a proper interval and let
$F:I^2\to I$ be a partially strictly monotone and bisymmetric continuous mapping. Then:
\begin{enumerate}
   \item[\rm{(1.)}] $F$ is reflexive if and only if there exists a continuous,
   strictly monotone function $f\colon J\to I$ and a parameter
   $r\in\mathbb{R}\setminus\{0,1\}$ such that
   \begin{equation}\label{eqa1x3}
       F(x,y)
       =f\big(rf^{-1}(x)+(1-r)f^{-1}(y)\big),
       \qquad x,y\in I.
   \end{equation}
   In particular, $F$ is a weighted
   quasi-arithmetic mean.

   \item[\rm{(2.)}] In general, there exist constants $A,B,C\in\mathbb{R}$ with
   $AB\neq 0$, and a continuous, strictly monotone function
   $f\colon J\to I$ such that
   \begin{equation}\label{eqa1x2}
       F(x,y)
       =f\big(Af^{-1}(x)+Bf^{-1}(y)+C\big),
       \qquad x,y\in I.
   \end{equation}
\end{enumerate}
Here $J\subset\mathbb{R}$ is a proper interval.
\end{thm}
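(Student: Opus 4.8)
The plan is to produce a homeomorphism $g\colon I\to J\subseteq\mathbb{R}$ that conjugates $F$ to an affine operation, that is, to find $g$ satisfying $g(F(x,y))=A\,g(x)+B\,g(y)+C$; taking $f=g^{-1}$ then delivers the representation \eqref{eqa1x2}, and imposing reflexivity will specialize it to \eqref{eqa1x3}. First I would record what the hypotheses buy us. Since $F$ is continuous and strictly monotone in each variable, every section $R_a=F(\cdot,a)$ and $L_a=F(a,\cdot)$ is a strictly monotone continuous injection, hence a homeomorphism of $I$ onto a subinterval; in particular $F$ is cancellative in each variable, and after composing with an orientation-reversing homeomorphism of $I$ if necessary I may assume both families of sections are increasing. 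The easy (``if'') direction of part~(1) is a direct check: for $F(x,y)=f(rf^{-1}(x)+(1-r)f^{-1}(y))$ one has $F(x,x)=f(f^{-1}(x))=x$, and continuity, strict monotonicity and bisymmetry are immediate; so the content lies in the representation.

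The heart of the argument is the construction of the additive coordinate $g$, and I would extract it from bisymmetry read as \emph{mediality}, $(x*y)*(u*v)=(x*u)*(y*v)$ with $*=F$. Fixing two interior points $a<b$, I would use bisymmetry to show that the points generated by repeatedly forming $F$-combinations of $a$ and $b$ are consistently ordered and path-independent: bisymmetry is exactly the identity needed to guarantee that an iterated combination depends only on its ``dyadic weight'' and not on the order in which the combinations are performed. Assigning to each such iterate its corresponding dyadic affine value, I would obtain a monotone assignment on a dense subset, and then extend it by monotonicity and continuity of $F$ to a strictly monotone continuous map $g$. Continuity of $F$ upgrades the dyadic-level identity to the full functional identity $g(F(x,y))=A\,g(x)+B\,g(y)+C$ on all of $I^2$.

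Once $g$ is available, the conjugated operation $G(s,t)=g\bigl(F(g^{-1}(s),g^{-1}(t))\bigr)$ on $J^2$ is again continuous, strictly monotone in each variable, and bisymmetric, and I would identify it as $G(s,t)=\phi(s)+\psi(t)+c$ with $\phi,\psi$ additive up to the constant $c$. Solving the resulting Cauchy equations $\phi(s+s')=\phi(s)+\phi(s')$ under continuity forces $\phi(s)=As$ and $\psi(t)=Bt$, and injectivity of the sections gives $A,B\neq0$, hence $AB\neq0$; this proves part~(2), while mediality reduces to the commutativity of $\phi$ and $\psi$, automatic for scalar maps. For part~(1) I would substitute reflexivity: in the coordinate $s=g(x)$ the condition $F(x,x)=x$ reads $(A+B-1)s+C=0$ for all $s$ in the nondegenerate interval $g(I)$, which forces $A+B=1$ and $C=0$; writing $A=r$ and $B=1-r$ and recalling $AB\neq0$ yields $r\in\mathbb{R}\setminus\{0,1\}$ and the weighted quasi-arithmetic form with $f=g^{-1}$.

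The step I expect to be the main obstacle is the construction in the second paragraph: making the dyadic-combination scheme fully rigorous. The delicate points are proving that the iterated $F$-values are well defined independently of the order of combination (the one place where the full strength of bisymmetry is used), that they form a dense, correctly ordered set so that $g$ extends to a genuine homeomorphism, and handling the local-to-global issue that each section maps $I$ only onto a proper subinterval, so that $(I,F)$ is a priori only a local quasigroup and the additive coordinate must be assembled from overlapping local pieces and extended to the endpoints of $I$. By contrast, once the conjugated equation is reduced to Cauchy's equation, continuity rules out pathological additive solutions immediately, so the final identification of the constants is routine.
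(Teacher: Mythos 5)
First, note that the paper does not prove this statement at all: Theorem~\ref{Aczel3} is quoted as a classical result with a reference to Acz\'el--Dhombres \cite[Chap.~11]{Aczel1989}, so there is no in-paper proof to compare against. Judged on its own terms, your sketch correctly captures the classical strategy for the \emph{reflexive, symmetric} case (generate a dyadically indexed dense set from two points $a<b$, define $f_0$ on dyadic rationals via $f_0\bigl(\tfrac{r+s}{2}\bigr)=F(f_0(r),f_0(s))$, extend by monotonicity and density); this is essentially the argument the paper itself adapts in the proof of Theorem~\ref{T:refl2p}. But for the statement as posed there is a genuine gap, concentrated exactly where you flag ``the main obstacle.''

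The dyadic-generation scheme is not well defined outside the symmetric reflexive case. The label you want to attach to an iterate such as $F(a,b)$ is its image under the (sought) affine coordinate, i.e.\ $A\cdot 0+B\cdot 1+C=B+C$ when $a\mapsto 0$, $b\mapsto 1$; since $A,B,C$ are precisely the unknowns of the theorem, there is no canonical ``dyadic affine value'' to assign, and path-independence via mediality does not rescue this (it only tells you that two iterates with the same formal weight coincide, not what real number to attach to that weight). Worse, in the non-reflexive regime of part~(2) the set generated from $\{a,b\}$ need not be dense in any subinterval of $I$: when $A+B\neq 1$ the iterates escape every compact subinterval --- this is exactly the mechanism of Lemma~\ref{lem:escape} and the counterexample construction of Section~\ref{sec:H0} --- so the monotone extension step has nothing to extend along. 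Two further soft spots: the reduction ``$G(s,t)=\phi(s)+\psi(t)+c$ with $\phi,\psi$ additive'' is asserted rather than derived (obtaining this Pexider-type separation from bisymmetry is essentially the content of the theorem, so as written the argument is circular at that point); and the claimed normalization ``both families of sections increasing after conjugating by a reversing homeomorphism'' is false, since conjugation preserves the relative orientations of the two sections, and the theorem genuinely allows $A$ and $B$ of opposite signs. The standard route (and the one in \cite{Aczel1989}) is to prove the reflexive case first and then reduce the general case to it, e.g.\ via the diagonal $\delta(x)=F(x,x)$, which bisymmetry forces to satisfy $\delta(F(x,y))=F(\delta(x),\delta(y))$, so that $\delta^{-1}\circ F$ is reflexive and bisymmetric on a suitable subinterval and $\delta$ is then shown to be affine in the resulting coordinate. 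Your sketch would need to be restructured along these lines to establish part~(2).
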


From now on, we will refer as \emph{weigthed quasi–sums} to the functions defined by the formula of type \eqref{eqa1x2}.
The reflexive case \eqref{eqa1x3} corresponds to a normalization of the
parameters with $A+B=1$ after a suitable affine rescaling of $f$.

Another remarkable theorem of Acz\'el characterizes the so-called \emph{quasi-sums} using associativity.

  \begin{thm}[Acz\'el]\label{Aczel_assoc}
A function $F:I^2\to I$ is continuous, partially strictly monotonic, associative mapping if and only if there is a continuous, strictly increasing\footnote{The original theorem of Aczél assumes cancellativity of $F:I^2\to I$ that is if $F(x,y_1)=F(x,y_2)$ or $F(y_1,x)=F(y_2,x)$, then $y_1=y_2$. It is easy to see that in case of continuous binary operations on interval $I\subseteq \R$ cancellativity  of $F$ and strict monotonicity of $F$ are equivalent.} function $f\colon [0,1]\to I$ that satisfies
\begin{equation}\label{eqa1}
    F(x,y)=f \left(f^{-1}(x)+f^{-1}(y)\right),\qquad x,y\in I.
\end{equation}
\end{thm}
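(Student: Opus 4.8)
The plan is to prove the two implications separately, disposing of sufficiency quickly and then concentrating on the substantial necessity direction, which I would route through the already-established quasi-sum theorem.

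For the \emph{if} direction I would simply verify the three regularity properties directly from the representation $F(x,y)=f(f^{-1}(x)+f^{-1}(y))$. Each section $x\mapsto F(x,y_0)=f\big(f^{-1}(x)+f^{-1}(y_0)\big)$ is a composition of the continuous strictly increasing maps $f$, $f^{-1}$ and a translation, hence continuous and strictly increasing, and joint continuity follows the same way; associativity is inherited from the associativity of $+$ together with the identity $f^{-1}(f(t))=t$. The only genuine point to watch is that $f^{-1}(x)+f^{-1}(y)$ must lie in the domain of $f$ for all $x,y\in I$, which is exactly what the normalization of the domain in the statement guarantees.

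For the \emph{only if} direction I would first record that strict monotonicity yields cancellativity (as noted in the footnote), and then reduce to Theorem~\ref{Aczel3}. The key intermediate claim is that a continuous, strictly increasing, associative $F$ on an interval is automatically \emph{commutative}; granting this, associativity together with commutativity gives bisymmetry, since iterating \eqref{E:associativity} and swapping the two middle arguments shows that both $F(F(x,y),F(u,v))$ and $F(F(x,u),F(y,v))$ collapse to the same ordered product of $x,y,u,v$, so \eqref{E:bisymmetry} holds. At that point Theorem~\ref{Aczel3}.(2) applies and furnishes constants $A,B,C$ with $AB\neq0$ and a continuous strictly monotone $f$ such that $F(x,y)=f\big(Af^{-1}(x)+Bf^{-1}(y)+C\big)$. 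To pin the constants I would substitute this representation back into associativity: using $f^{-1}(F(x,y))=Af^{-1}(x)+Bf^{-1}(y)+C$ and the injectivity of $f$, comparing the coefficients of $f^{-1}(x)$ and $f^{-1}(z)$ on the two sides forces $A^2=A$ and $B^2=B$, hence $A=B=1$. The surviving additive constant is absorbed by passing to $\tilde f(s):=f(s-C)$, for which $\tilde f^{-1}(x)=f^{-1}(x)+C$ and $\tilde f\big(\tilde f^{-1}(x)+\tilde f^{-1}(y)\big)=f\big(f^{-1}(x)+f^{-1}(y)+C\big)=F(x,y)$, so that $\tilde f$ realizes the claimed form, after which one only has to reconcile the domain of $\tilde f$ with the interval appearing in the statement.

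The main obstacle is the commutativity claim, which is equivalent to constructing an order isomorphism of $(I,F)$ onto an additive subsemigroup of $(\mathbb{R},+)$. I would establish it by the classical one-parameter construction: fix a base point $a$, define integer iterates $a^{(n)}$, and use the intermediate value theorem together with strict monotonicity to extract unique $F$-square roots, obtaining a strictly monotone, continuous family indexed by dyadic parameters whose limit parametrization is exactly $\tilde f^{-1}$; commutativity is immediate for powers of a single element and extends to all of $I$ by density and continuity. The delicate part is not any single algebraic identity but the archimedean/exhaustion analysis near the endpoints of $I$ — determining whether the parameter interval is a half-line, the whole line, or bounded, and checking that the construction genuinely yields a continuous strictly increasing bijection onto the correct interval. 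I note finally that this semigroup construction can also serve as a self-contained foundation for the whole necessity direction, bypassing the detour through bisymmetry and Theorem~\ref{Aczel3} altogether.
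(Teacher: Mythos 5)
The paper does not prove Theorem~\ref{Aczel_assoc} at all: it is quoted as a classical result of Acz\'el (with only a footnote reconciling cancellativity with strict monotonicity), so there is no in-paper argument to compare yours against. Judged on its own, your outline is essentially correct. The sufficiency check is routine; in the necessity direction the algebra is sound: commutativity plus associativity does yield bisymmetry (your ``collapse to the same ordered product'' computation is exactly the chain of identities the paper itself records in Remark~\ref{rem:onepoint}), substituting the quasi-sum form from Theorem~\ref{Aczel3}.(2) into associativity does force $A^2=A$, $B^2=B$, hence $A=B=1$ since $AB\neq0$, and the shift $\tilde f(s)=f(s-C)$ correctly absorbs the additive constant. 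Two caveats. First, the entire weight of the proof rests on your ``key intermediate claim'' that a continuous, strictly monotone, associative operation on an interval is commutative. This is not a light lemma one borrows on the way to the theorem: it is essentially Acz\'el's 1949 theorem on continuous cancellative semigroups on an interval, and its proof (the dyadic-root parametrization with the Archimedean/endpoint analysis you flag) is the real content of Theorem~\ref{Aczel_assoc} itself. So the detour through bisymmetry and Theorem~\ref{Aczel3} does not actually reduce the difficulty; as you note yourself at the end, once the one-parameter construction is carried out one already has the representation directly. Your proposal is therefore a correct architecture with the hard step identified but only sketched. Second, a small point about the statement rather than your argument: the domain $[0,1]$ of $f$ is not closed under addition, so $f^{-1}(x)+f^{-1}(y)$ need not lie in it; your remark that ``the normalization of the domain in the statement guarantees'' well-definedness is not literally true as printed, and the generator's domain must in fact be taken to be an interval stable under the relevant sums (this is a defect of the quoted statement, not of your proof).
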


\subsection*{Regularity improvement by bisymmetry}

In Acz\'el’s original proofs, continuity was assumed from the beginning.
It was later shown in \cite{BKSZ2021} that in the symmetric, reflexive
case this assumption can be removed completely.

\begin{thm}[Burai–Kiss–Szokol, 2021]\label{T:bisymmetryimpliescontinuity}
If $F:I^2\to I$ is reflexive, partially strictly increasing, symmetric, and
bisymmetric, then $F$ is necessarily continuous and hence of the
quasi–arithmetic form \eqref{eq:QAM} for some continuous strictly monotone
$f$.
\end{thm}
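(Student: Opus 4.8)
The plan is to reduce everything to a single continuity statement: once we know that $F$ is continuous, Theorem~\ref{Aczel1} applies verbatim and yields the quasi-arithmetic representation \eqref{eq:QAM}, so the entire task is to upgrade ``bisymmetric $+$ symmetric $+$ reflexive $+$ partially strictly increasing'' to ``continuous''. First I would record the elementary but crucial observation that $F$ is a strict mean: for $x<y$, partial strict monotonicity together with reflexivity gives $x=F(x,x)<F(x,y)<F(y,y)=y$. In particular every section $\phi=F(a,\cdot)$ (and, by symmetry, $F(\cdot,a)$) is strictly increasing and satisfies the squeeze $\min\{a,y\}\le F(a,y)\le\max\{a,y\}$; this already forces each section to be continuous at the diagonal point $y=a$, and shows that $F$ is jointly continuous along the diagonal.

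Since each section is strictly increasing, its only possible discontinuities are jumps, and there are at most countably many of them; thus continuity of $F$ reduces to showing that no section ever jumps. The engine for this is a conjugacy identity extracted from bisymmetry: substituting $u=x$ into \eqref{E:bisymmetry} and using reflexivity $F(x,x)=x$ gives
\begin{equation*}
  F\bigl(F(a,y),F(a,v)\bigr)=F\bigl(a,F(y,v)\bigr)\qquad(y,v\in I),
\end{equation*}
that is, $F(\phi(y),\phi(v))=\phi(F(y,v))$ for $\phi=F(a,\cdot)$. Hence every section is an order isomorphism of $(I,F)$ onto its range $R_a=\phi(I)$, and $R_a$ is closed under the strict mean $F$.

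The heart of the proof --- and the step I expect to be the main obstacle --- is to show that an $F$-closed set such as $R_a$ cannot omit an interval, so that $\phi$ has no jumps. A jump of $\phi$ at $b$ is exactly a gap $G=(c_1,c_2)$ in $R_a$, with points of $R_a$ approaching $c_1$ from below and $c_2$ from above. To derive a contradiction I would analyze the countable family of iterated $F$-combinations $D(s,t)\subseteq[s,t]$ generated from two straddling points $s<c_1<c_2<t$ of $R_a$. Bisymmetry forces these combinations to organize into a consistent ``dyadic'' family whose value depends only on the combinatorial weights, and symmetry makes the elementary combination a genuine midpoint operation; the goal is to prove that $D(s,t)$ is order-dense in $[s,t]$. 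Density then makes $R_a$ meet $G$ (since $R_a\supseteq D(s,t)$ by $F$-closedness), contradicting $R_a\cap G=\varnothing$. Proving this density is precisely where reflexivity, symmetry and bisymmetry are indispensable: it is the exact point at which the construction of the present paper (where $f$ maps onto a nowhere dense set) breaks the conclusion once reflexivity and symmetry are dropped. In carrying this out I would also exploit the already established diagonal continuity to control the one-sided limits $\phi(b^{-})=c_1$, $\phi(b^{+})=c_2$ and, via the conjugacy identity, to transport continuity from the diagonal into the gap.

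Once no section jumps, each section is continuous; since $F$ is monotone in each variable and separately continuous, it is jointly continuous by the standard regularity result for coordinatewise monotone maps. Therefore $F$ satisfies all hypotheses of Theorem~\ref{Aczel1}, which provides a continuous strictly monotone $f$ with $F(x,y)=f^{-1}\bigl((f(x)+f(y))/2\bigr)$, completing the proof.
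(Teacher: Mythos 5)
First, a remark on the comparison itself: the paper does not prove Theorem~\ref{T:bisymmetryimpliescontinuity} --- it quotes it from \cite{BKSZ2021} --- but the proof of Theorem~\ref{T:refl2p} in Section~\ref{sec:two-point-refl} is an explicit adaptation of that argument, so it is the natural in-paper reference point. Your overall architecture matches it: reduce to continuity, observe the strict-mean property $x=F(x,x)<F(x,y)<F(y,y)=y$, note that sections are strictly increasing so only jump discontinuities are possible, generate from two points a dyadically indexed family of iterated $F$-values closed under $F$, and show that this family is dense so that no gap can survive. Your preliminary steps (strict mean, continuity of sections at the diagonal, the conjugacy identity $F(\phi(y),\phi(v))=\phi(F(y,v))$ obtained by setting $u=x=a$ in \eqref{E:bisymmetry}, and the final passage from separate to joint continuity for coordinatewise monotone maps) are all correct.

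The genuine gap is exactly where you flag ``the main obstacle'': you assert that the iterated family $D(s,t)$ is order-dense in $[s,t]$ but supply no mechanism for proving it, and this density \emph{is} the content of the theorem --- everything else is routine. Saying that ``bisymmetry forces these combinations to organize into a consistent dyadic family'' only gives you a strictly increasing map $f_0:\mathcal D\to D(s,t)$ satisfying $f_0\bigl(\tfrac{d_1+d_2}{2}\bigr)=F(f_0(d_1),f_0(d_2))$; a strictly increasing image of the dyadics can perfectly well be nowhere dense (that is precisely what the constructions of Section~\ref{sec:H0} exploit), so density does not follow from the combinatorial organization alone. The missing idea, as carried out in step \textbf{(4)} of the proof of Theorem~\ref{T:refl2p}, is a cardinality argument: if $D(s,t)$ had a gap $(X,Y)$, one fixes dyadic sequences $d_n\nearrow z$, $D_n\searrow z$ with $f_0(d_n)\to X$, $f_0(D_m)\to Y$, and then, for each of the uncountably many two-sided accumulation points $s'$ of $\overline{D(s,t)}$, the monotonicity of $F$ produces a nondegenerate open interval $(F(X,s'),F(Y,s'))$; bisymmetry and the dyadic identity force these intervals to be pairwise disjoint for distinct accumulation points, yielding uncountably many pairwise disjoint nondegenerate subintervals of a bounded interval --- a contradiction. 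Without this (or some equivalent) argument your proof does not close; I would ask you to supply it before accepting the proposal.
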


Thus, in the symmetric reflexive case, bisymmetry together with strict
monotonicity automatically yields the full regularity required in
Acz\'el’s characterization.

In contrast, in the \emph{non-symmetric} case our understanding is still
far from complete. For example, we do not even know whether the
following simple implication holds for a bisymmetric, strictly increasing
operation $F$:
\begin{equation}\label{eq:sign-implication}
  x<y \ \text{ and }\ F(x,y)<F(y,x)
  \ \Longrightarrow\
  F(u,v)<F(v,u)\quad(\forall\,u<v).
\end{equation}
This implication is valid for weighted quasi-arithmetic means (that is,
for operations of the form \eqref{eqa1x2} with $\alpha,\beta>0$), but it
is open in full generality.

Nevertheless, in the reflexive case a strong result is known when
symmetry holds at \emph{one} pair of distinct points.

\begin{thm}[Burai–Kiss–Szokol, 2023]\label{thm:one-symmetric-pair}
Let $I$ be a proper interval and let
$F\colon I^2\to I$ be a reflexive, partially strictly increasing, and
bisymmetric function.
Suppose that there exist $x,y\in I$ with $x\ne y$ such that
\[
   F(x,y)=F(y,x).
\]
Then $F$ is symmetric on $I$ and continuous, i.e., there exists a
continuous, strictly increasing function $f:I\to\mathbb{R}$ such that
\begin{equation}\label{eqa2}
    F(u,v)
    = f^{-1}\!\left(\frac{f(u)+f(v)}{2}\right),
    \qquad u,v\in I.
\end{equation}
In particular, $F$ is a quasi-arithmetic mean.
\end{thm}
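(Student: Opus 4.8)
The plan is to reduce everything to the symmetric, reflexive case already settled in Theorem~\ref{T:bisymmetryimpliescontinuity}: once I know that $F$ is symmetric on all of $I$, that theorem immediately yields continuity and the quasi-arithmetic representation \eqref{eqa2}. So the whole task is to upgrade symmetry at the single pair $(x,y)$ to global symmetry. A preliminary observation I will use throughout is that reflexivity together with partial strict monotonicity already forces $F$ to be a strict mean: for $x<y$ one has $x=F(x,x)<F(x,y)<F(y,y)=y$, so $F$ maps $[x,y]^2$ into $[x,y]$ and the value $m:=F(x,y)=F(y,x)$ lies strictly between $x$ and $y$.

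The engine of the proof is the behaviour of the symmetric set $\Sigma:=\{(u,v)\in I^2: F(u,v)=F(v,u)\}$. First I would record that $\Sigma$ is closed under the bisymmetric combination $((a,b),(c,d))\mapsto(F(a,c),F(b,d))$: applying \eqref{E:bisymmetry} twice and using $(a,b),(c,d)\in\Sigma$ gives
\[
  F(F(a,c),F(b,d))=F(F(a,b),F(c,d))=F(F(b,a),F(d,c))=F(F(b,d),F(a,c)),
\]
so $(F(a,c),F(b,d))\in\Sigma$. Specialising the second argument to a diagonal pair $(t,t)\in\Sigma$ (reflexivity) and using $F(t,t)=t$ in \eqref{E:bisymmetry} yields the intertwining identity $F(t,F(u,v))=F(F(t,u),F(t,v))$; since $F(t,\cdot)$ is injective this gives the exact equivalence
\[
  (u,v)\in\Sigma\ \Longleftrightarrow\ (F(t,u),F(t,v))\in\Sigma\qquad(t\in I).
\]
Taking $t=x$ shows that the section $\Sigma_x:=\{v:(x,v)\in\Sigma\}$ is a sub-magma of $(I,F)$: if $u,v\in\Sigma_x$ then $(F(x,x),F(u,v))=(x,F(u,v))\in\Sigma$, i.e.\ $F(u,v)\in\Sigma_x$. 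Since $x,y\in\Sigma_x$, the whole $F$-generated structure of $\{x,y\}$ lies in $\Sigma_x$; and combining the off-diagonal seed $(x,y)$ with the diagonal and iterating the closure likewise produces a recursively refined, dyadic-type family of symmetric pairs accumulating throughout $[x,y]^2$.

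The main obstacle is the final promotion from this densely generated family of symmetric pairs to symmetry on all of $[x,y]^2$, and this is exactly where reflexivity must be used in an essential way. A priori $F$ is only monotone in each variable, so membership in $\Sigma$ cannot be passed to limits for free, and indeed the counterexamples of this paper show that, without reflexivity, an $F$-generated sub-magma of two points can be a nowhere dense Cantor-type set rather than a dense one. The plan is therefore to show that reflexivity rules out this obstruction: using the strict-mean property and the invariance $v\in\Sigma_x\Leftrightarrow F(x,v)\in\Sigma_x$ together with the monotonicity of the sections (which have one-sided limits off a countable set), one argues that a maximal gap of $\Sigma_x$ in $[x,y]$ is impossible, forcing $\Sigma_x=[x,y]$ and, by the two-dimensional version of the same argument, $\Sigma\supseteq[x,y]^2$. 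At that point $F|_{[x,y]}$ is reflexive, partially strictly increasing, symmetric and bisymmetric, so Theorem~\ref{T:bisymmetryimpliescontinuity} applies on $[x,y]$; a final step lets $x$ and $y$ tend to the endpoints of $I$ (or re-runs the argument with the now-known symmetric subinterval as a seed) to obtain global symmetry, continuity, and the representation \eqref{eqa2} on all of $I$. I expect the gap-exclusion step --- turning \emph{order-dense and reflexive} into \emph{topologically dense and closed} without assuming continuity --- to be the technically hardest point.
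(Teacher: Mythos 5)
First, a point of comparison: the paper does not prove Theorem~\ref{thm:one-symmetric-pair} at all --- it is quoted as an external result from \cite{BKSZ2023} --- so the only in-paper analogue of your strategy is the proof of Theorem~\ref{T:refl2p}, whose skeleton (generate a set of ``words'' from two seed points, parametrize it by dyadic rationals, prove density via the uncountably-many-disjoint-intervals argument, then extend by monotonicity) your outline resembles. The algebraic identities you do verify are correct and useful: the closure of $\Sigma$ under $((a,b),(c,d))\mapsto(F(a,c),F(b,d))$, the intertwining identity $F(t,F(u,v))=F(F(t,u),F(t,v))$ obtained from bisymmetry and $F(t,t)=t$, and the resulting equivalence $(u,v)\in\Sigma\Leftrightarrow(F(t,u),F(t,v))\in\Sigma$ all check out, and an easy induction upgrades them to $W_\infty\times W_\infty\subseteq\Sigma$ for the set $W_\infty$ generated from $\{x,y\}$.

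However, the proposal has a genuine gap precisely where you flag it, and in two further places. (1) The step ``a maximal gap of $\Sigma_x$ in $[x,y]$ is impossible, forcing $\Sigma_x=[x,y]$'' is not an argument but a statement of the goal; the density of $W_\infty$ in $[x,y]$ is the entire technical content of Theorems~\ref{T:bisymmetryimpliescontinuity} and~\ref{T:refl2p} (steps (3)--(4) of the latter: well-definedness and strict monotonicity of the dyadic parametrization $f_0$, followed by the disjoint-intervals contradiction), and none of it is reproduced or even invoked in a checkable form. (2) Even granting density of $W_\infty$, concluding $\Sigma\supseteq[x,y]^2$ --- which you need before Theorem~\ref{T:bisymmetryimpliescontinuity} can be applied to $F|_{[x,y]^2}$ --- requires passing symmetry from a dense set of pairs to all pairs, and monotone sections alone do not transport membership in $\Sigma$ to limits; the standard route is instead to first derive the quasi-arithmetic formula on the dense set and extend \emph{that} by monotonicity, which is a different (and omitted) argument. (3) The final extension from $[x,y]$ to all of $I$ by ``letting $x$ and $y$ tend to the endpoints'' is unsupported: the hypothesis supplies only one symmetric pair, and the outward implication $(F(t,u),F(t,v))\in\Sigma\Rightarrow(u,v)\in\Sigma$ only helps if one can find $t$ driving \emph{both} $F(t,u)$ and $F(t,v)$ into the already-symmetric interval, which the strict-mean property does not guarantee in one step and which, without continuity, cannot be achieved by a naive iteration-and-limit argument. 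As it stands the proposal is a plausible reduction scheme with correct preparatory lemmas, but the theorem's core is not proved.
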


This shows that in the reflexive case, a single ``symmetry point'' is
already enough to force global symmetry and regularity, while in the
non-reflexive regime (which is the main focus of the present paper) such
a conclusion is no longer available.

\section{Noncontinuous strictly increasing bisymmetric operations}\label{sec:H0}

In this section we provide an explicit example of a bisymmetric and
partially strictly increasing operation $F$ that fails to be continuous.
The construction is based on a Cantor--type perfect set with
linear independence property over a fixed countable subfield of $\mathbb R$.
If the parameters are chosen symmetrically, then the resulting $F$ can be
made symmetric as well.

\subsection{Weighted quasi-sums on general sets}

The following lemma collects basic algebraic properties of operations of
quasi--sum type on an arbitrary ordered set.

\begin{lemma}\label{lem:weighted-quasisum}
Let $X,Y\subset\R$ with $|X|\ge2$, and let $f:X\to Y$ be a strictly increasing
bijection.
Let $\alpha,\beta\in\R$ be such that
\[
   \alpha f(x)+\beta f(y)\in Y \qquad (x,y\in X),
\]
and define
\[
   F(x,y):=f^{-1}(\alpha f(x)+\beta f(y)), \qquad x,y\in X.
\]
Then:
\begin{enumerate}
  \item[\rm{(i)}] $F$ is bisymmetric for all $\alpha,\beta\in\R$;
  \item[\rm{(ii)}] if $\alpha,\beta>0$, then $F$ is partially strictly increasing;
  \item[\rm{(iii)}] $F$ is reflexive if and only if $\alpha+\beta=1$;
  \item[\rm{(iv)}] $F$ is symmetric if and only if $\alpha=\beta$;
  \item[\rm{(v)}] if $\alpha,\beta\ne 0$, then $F$ is associative if and only if $\alpha=\beta=1$.
\end{enumerate}
\end{lemma}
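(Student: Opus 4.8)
The plan is to verify each of the five claims by direct substitution into the defining formula $F(x,y)=f^{-1}(\alpha f(x)+\beta f(y))$, exploiting the fact that $f$ is a strictly increasing bijection between $X$ and $Y$, so that $f^{-1}$ exists, is strictly increasing, and $f(f^{-1}(t))=t$ whenever $t\in Y$. The whole proof is essentially an algebraic computation in $Y\subseteq\R$ transported back to $X$ through $f$, and the only subtlety throughout is that one must keep track of the range condition $\alpha f(x)+\beta f(y)\in Y$ so that every application of $f^{-1}$ is legitimate; this is guaranteed by hypothesis, so I would flag it once and then proceed freely.

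For (i), I would compute both sides of the bisymmetry equation \eqref{E:bisymmetry} after setting $g:=f\circ F\circ(f^{-1}\times f^{-1})$, or more directly by applying $f$ to both sides. Writing $a=f(x)$, $b=f(y)$, $c=f(u)$, $d=f(v)$, the left-hand side becomes $f^{-1}\bigl(\alpha(\alpha a+\beta b)+\beta(\alpha c+\beta d)\bigr)$ and the right-hand side $f^{-1}\bigl(\alpha(\alpha a+\beta c)+\beta(\alpha b+\beta d)\bigr)$; expanding, both equal $f^{-1}(\alpha^2 a+\alpha\beta b+\alpha\beta c+\beta^2 d)$, so bisymmetry holds for all $\alpha,\beta$. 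For (ii), since $\alpha>0$ and $f$ is strictly increasing, the map $x\mapsto \alpha f(x)+\beta f(y_0)$ is strictly increasing, and composing with the strictly increasing $f^{-1}$ preserves this; the same argument applies to the second variable using $\beta>0$. For (iii), reflexivity $F(x,x)=x$ is equivalent to $(\alpha+\beta)f(x)=f(x)$ for all $x\in X$, i.e. $(\alpha+\beta-1)f(x)=0$; since $|X|\ge 2$ and $f$ is injective, $f$ is nonconstant, forcing $\alpha+\beta=1$, and conversely $\alpha+\beta=1$ clearly gives reflexivity. For (iv), symmetry is equivalent to $\alpha f(x)+\beta f(y)=\alpha f(y)+\beta f(x)$ for all $x,y$, i.e. $(\alpha-\beta)(f(x)-f(y))=0$; again nonconstancy of $f$ forces $\alpha=\beta$, and the converse is immediate.

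Part (v) is the one requiring the most care, since associativity \eqref{E:associativity} must be checked as a genuine functional identity rather than a single equation. Applying $f$ to both sides of $F(F(x,y),z)=F(x,F(y,z))$ and again writing $a=f(x)$, $b=f(y)$, $c=f(z)$, the left-hand side is $\alpha(\alpha a+\beta b)+\beta c=\alpha^2 a+\alpha\beta b+\beta c$ while the right-hand side is $\alpha a+\beta(\alpha b+\beta c)=\alpha a+\alpha\beta b+\beta^2 c$. Associativity therefore holds precisely when $\alpha^2 a+\beta c=\alpha a+\beta^2 c$ for all admissible $a,c$, that is $\alpha(\alpha-1)a=\beta(\beta-1)c$; comparing coefficients of the independent quantities $a$ and $c$ (which range over a set with at least two values since $|X|\ge2$), this forces $\alpha(\alpha-1)=0$ and $\beta(\beta-1)=0$, and under the standing assumption $\alpha,\beta\neq 0$ this yields $\alpha=\beta=1$. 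The converse is the observation that $\alpha=\beta=1$ makes both sides equal to $f^{-1}(a+b+c)$. The main obstacle I anticipate is being rigorous about the coefficient-comparison step in (v): one cannot simply ``equate coefficients'' unless the values $a=f(x)$ and $c=f(z)$ can be varied independently, so I would make explicit that fixing $c$ and letting $a$ vary over the (at least two-element) set $f(X)$ forces $\alpha(\alpha-1)=0$, and symmetrically for $\beta$, thereby justifying the conclusion without any linear-independence assumption on $Y$ beyond $|X|\ge 2$.
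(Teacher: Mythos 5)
Your proposal is correct and follows essentially the same route as the paper: apply $f$ to both sides, reduce each property to a linear identity in the values $f(x)$, and use that $f$ is nonconstant (since $|X|\ge 2$ and $f$ is injective) to compare coefficients; your extra care in part (v) about varying $a$ and $c$ independently matches the paper's "nonconstant $f$" argument. No gaps.
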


\begin{proof}
(i) For $x,y,u,v\in X$ we have
\begin{align*}
  f\big(F(F(x,y),F(u,v))\big)
  &= \alpha f(F(x,y))+\beta f(F(u,v))  \\
  &= \alpha(\alpha f(x)+\beta f(y))+\beta(\alpha f(u)+\beta f(v)) \\
  &= \alpha^2 f(x)+\alpha\beta f(y)+\alpha\beta f(u)+\beta^2 f(v).
\end{align*}
The same expression is obtained for $f(F(F(x,u),F(y,v)))$, hence by
injectivity of $f$,
\[
  F(F(x,y),F(u,v))=F(F(x,u),F(y,v)).
\]

(ii) If $\alpha,\beta>0$, then for fixed $y_0$ the function
$x\mapsto \alpha f(x)+\beta f(y_0)$ is strictly increasing, and so is
$x\mapsto F(x,y_0)=f^{-1}(\alpha f(x)+\beta f(y_0))$.
The argument in the second variable is analogous.

(iii) We have
\[
  F(x,x)=x
  \ \Longleftrightarrow\
  f(F(x,x))=f(x)
  \ \Longleftrightarrow\
  (\alpha+\beta)f(x)=f(x)
\]
for all $x\in X$.
If $\alpha+\beta=1$ this is automatic, hence $F$ is reflexive.
Conversely, if $F$ is reflexive, then $(\alpha+\beta-1)f(x)=0$ for all
$x\in X$.
Since $f$ is strictly increasing and $|X|\ge2$, $f$ is not constant, so
$\alpha+\beta-1=0$.

(iv) Symmetry means
\[
  \alpha f(x)+\beta f(y)=\alpha f(y)+\beta f(x)
\]
for all $x,y\in X$, i.e.\ $(\alpha-\beta)(f(x)-f(y))=0$.
If $\alpha=\beta$ the equality holds.
If $F$ is symmetric and $f$ is nonconstant, we can pick $x,y$ with
$f(x)\neq f(y)$, forcing $\alpha=\beta$.

(v) Associativity $F(F(x,y),z)=F(x,F(y,z))$ gives
\begin{align*}
  f(F(F(x,y),z))
  &= \alpha f(F(x,y))+\beta f(z)
   = \alpha(\alpha f(x)+\beta f(y))+\beta f(z) \\
  &= \alpha^2 f(x)+\alpha\beta f(y)+\beta f(z),
\\
  f(F(x,F(y,z)))
  &= \alpha f(x)+\beta f(F(y,z))
   = \alpha f(x)+\beta(\alpha f(y)+\beta f(z)) \\
  &= \alpha f(x)+\alpha\beta f(y)+\beta^2 f(z).
\end{align*}
Thus
\[
  (\alpha^2-\alpha)f(x)+(\beta-\beta^2)f(z)=0
\]
for all $x,z\in X$.
Again using that $f$ is nonconstant, we get $\alpha^2-\alpha=0$ and
$\beta-\beta^2=0$, hence $\alpha,\beta\in\{0,1\}$.
The assumption $\alpha,\beta\ne0$ leaves only $\alpha=\beta=1$.

Conversely, if $\alpha=\beta=1$, then
\[
  f(F(F(x,y),z))=f(x)+f(y)+f(z)=f(F(x,F(y,z))),
\]
so $F$ is associative.
\end{proof}

We will also use a simple ``escape lemma'' for iterated linear
combinations.

\begin{lemma}\label{lem:escape}
Let $I\subset\mathbb{R}$ be a bounded closed interval, and let $S_0\subset I$ be
nonempty.
For fixed coefficients $\alpha_1,\dots,\alpha_k>0$ with
\[
\lambda:=\sum_{i=1}^k \alpha_i \ne 1,
\]
define recursively
\[
   S_{m+1}:=\alpha_1 S_m+\cdots+\alpha_k S_m
   =\Big\{\sum_{i=1}^k \alpha_i x_i : x_i\in S_m\Big\},
   \qquad m\ge0,
\]
and set $S:=\bigcup_{m\ge0} S_m$.
Then for every bounded closed interval $J\subset(0,\infty)$ one has
\[
   S_m\cap J=\emptyset
   \qquad\text{for all sufficiently large } m.
\]
\end{lemma}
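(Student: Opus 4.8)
The plan is to avoid describing the sets $S_m$ explicitly (they are generally Cantor-like and complicated) and instead trap each $S_m$ inside a single interval whose endpoints move geometrically with ratio $\lambda$. Write $a:=\inf S_0$ and $b:=\sup S_0$; these are finite because $S_0\subset I$ is bounded, and I will use the positivity $a>0$ coming from the ambient interval lying in the positive half-line. First I would prove by induction on $m$ that
\[
   S_m\subseteq[\lambda^m a,\ \lambda^m b].
\]
The base case is just $S_0\subseteq[a,b]$. For the inductive step, take any generator $\sum_{i=1}^k\alpha_i x_i$ of $S_{m+1}$ with $x_1,\dots,x_k\in S_m\subseteq[\lambda^m a,\lambda^m b]$; since each $\alpha_i>0$ and $\sum_i\alpha_i=\lambda$, multiplying the bounds $\lambda^m a\le x_i\le\lambda^m b$ by $\alpha_i$ and summing gives $\lambda^{m+1}a\le\sum_i\alpha_i x_i\le\lambda^{m+1}b$. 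As this holds for every element of $S_{m+1}$, the enclosure propagates.

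With this enclosure in hand, the conclusion reduces to separating a fixed $J=[c,d]\subset(0,\infty)$ from the moving interval $[\lambda^m a,\lambda^m b]$, and this is exactly where the hypothesis $\lambda\neq1$ (together with $a>0$) does the work. I would split into two cases. If $\lambda>1$, then $\lambda^m a\to+\infty$ monotonically, so I choose $M$ with $\lambda^M a>d$; for all $m\ge M$ one has $S_m\subseteq[\lambda^m a,\lambda^m b]\subset(d,\infty)$, hence $S_m\cap J=\emptyset$. If instead $0<\lambda<1$ (note $\lambda>0$ automatically, as all $\alpha_i>0$), then $\lambda^m b\to0^+$ monotonically, so I choose $M$ with $\lambda^M b<c$; for all $m\ge M$ one has $S_m\subseteq(0,c)$, which again misses $J$. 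In both cases the threshold $M$ depends only on $J$ and the fixed data $a,b,\lambda$, which is all that is required.

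The step I would flag as the crux is the strict positivity $a=\inf S_0>0$: it is precisely this that converts the two-sided hull bound into a \emph{one-sided} escape, so that the moving interval either slides entirely past $J$ to the right (when $\lambda>1$) or collapses into $(0,c)$ (when $\lambda<1$), never straddling $J$. If $S_0$ were allowed to meet both sides of $0$ with $\lambda>1$, the same hull estimate would only show that $[\lambda^m a,\lambda^m b]$ dilates across all of $(0,\infty)$, and the escape property would genuinely fail; thus the placement of the underlying interval on the positive half-line is used essentially, not cosmetically. Everything else is the routine geometric-sequence estimate above, and the hypothesis $\lambda\neq1$ enters only to rule out the stationary case $\lambda=1$, in which the enclosing interval would not move at all.
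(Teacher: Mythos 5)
Your proof is correct and takes essentially the same route as the paper's: the paper likewise sets $L_m:=\inf S_m$, $U_m:=\sup S_m$, notes $L_{m+1}=\lambda L_m$ and $U_{m+1}=\lambda U_m$ to get $S_m\subset[\lambda^m L_0,\lambda^m U_0]$, and then splits into the cases $\lambda>1$ and $0<\lambda<1$. You are also right to flag the strict positivity $\inf S_0>0$ as essential --- the lemma as stated only assumes $I\subset\mathbb{R}$, and the paper's proof uses this positivity tacitly when asserting $\lambda^m L_0\to+\infty$; it is harmless only because every application takes $S_0=H_0\subset[1,2]$ or $[\tfrac12,1]$.
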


\begin{proof}
Let $L_m:=\inf S_m$ and $U_m:=\sup S_m$.
Since each $\alpha_i>0$,
\[
   L_{m+1}=\lambda\,L_m,\qquad U_{m+1}=\lambda\,U_m,
\]
and therefore
\[
   S_m\subset [\lambda^m L_0,\,\lambda^m U_0]\qquad(m\ge0).
\]
Fix a bounded closed interval $J=[a,b]\subset(0,\infty)$.
If $\lambda>1$, then $\lambda^m L_0\to+\infty$, hence $\lambda^m L_0>b$ for all
$m$ sufficiently large, which implies $[\lambda^m L_0,\,\lambda^m U_0]\cap[a,b]=\emptyset$.
If $0<\lambda<1$, then $\lambda^m U_0\to0$, hence $\lambda^m U_0<a$ for all
$m$ sufficiently large, again implying disjointness.  Thus $S_m\cap J=\emptyset$
for all large $m$.
\end{proof}

\subsection{The main binary counterexample}

In this subsection we construct a bisymmetric, partially strictly increasing
binary operation that is not continuous.
We start from a perfect nowhere dense set consisting of elements that are
linearly independent over a fixed countable subfield of $\R$.
The following construction goes back to an idea of von Neumann \cite{N},
who observed that one can build perfect sets with strong algebraic
independence properties.
Later it was refined by Mycielski (see \cite{Mycielski1967}), who developed a
Cantor--type interval--splitting scheme ensuring that all unwanted algebraic
relations are eliminated at each stage. For further details, see Kechris
\cite[19.2]{Ke1995}. We write $\mathfrak c:=|\mathbb R|$ for the cardinality of the continuum.

\begin{thm}\label{thm:PerfectFIndependence}
Let $\mathbb{F}\subset\R$ be a countable subfield and let $I_0\subset\R$ be a nondegenerate
closed interval.
Then there exists a set $K\subset I_0$ such that
\begin{enumerate}[(i)]
  \item $K$ is perfect (in particular, it has no isolated points), and hence $|K|=\mathfrak c$;
  \item every finite subset of $K$ is linearly independent over $\mathbb{F}$;
  \item $K$ has Lebesgue measure zero and is nowhere dense in $I_0$; in particular, $K$ contains no nondegenerate interval.
\end{enumerate}
\end{thm}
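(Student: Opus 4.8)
The plan is to build $K$ by a Cantor-type interval-splitting scheme in which, at every finite stage, we actively destroy all possible nontrivial $\mathbb{F}$-linear relations among the points we are constructing. The key point is that a single finite linear relation $\sum_{j=1}^{n}\lambda_j x_j=0$ (with $\lambda_j\in\mathbb{F}$ not all zero) imposes only a codimension-one constraint on the tuple $(x_1,\dots,x_n)$, so its solution set is a hyperplane of measure zero in $\R^n$; since $\mathbb{F}$ is countable, the collection of all such forbidden hyperplanes (over all $n$ and all nonzero coefficient tuples) is countable. This is exactly the setting of Mycielski's theorem, so I would invoke the machinery behind \cite{Mycielski1967} and \cite[19.2]{Ke1995} rather than reprove it from scratch.

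First I would set up the usual dyadic tree of closed subintervals: to each finite binary string $s\in\{0,1\}^m$ we assign a nondegenerate closed interval $I_s\subset I_0$ so that $I_{s0},I_{s1}\subset I_s$ are disjoint, $I_{s0}$ lies entirely to the left of $I_{s1}$, and $\operatorname{diam}(I_s)\le 2^{-m}$ (and, to get measure zero, the total length at level $m$ tends to $0$). Defining $K:=\bigcap_{m\ge0}\bigcup_{s\in\{0,1\}^m}I_s$ then automatically yields a nonempty perfect set: it is closed as a nested intersection of compact sets, it has no isolated points because every point is coded by an infinite branch and the two children of each node remain separated, and the standard branch-coding gives a continuous injection of $\{0,1\}^{\N}$ into $K$, so $|K|=\mathfrak c$. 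Nowhere density and measure zero follow from the shrinking-length condition, since $K$ meets no interval once the level-$m$ covering has total length below any prescribed $\varepsilon$. This handles (i) and (iii) and uses only elementary properties of the tree construction.

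The main obstacle, and the heart of the argument, is arranging (ii): linear independence of every finite subset over $\mathbb{F}$. Here I would enumerate all ``threats'' as a countable list. A finite linear dependence among distinct points of $K$ involves finitely many distinct branches, hence (for $m$ large enough) finitely many distinct level-$m$ nodes $s_1,\dots,s_n$; a relation $\sum_j\lambda_j y_j=0$ with $y_j\in I_{s_j}$ is a threat we must eventually kill. The crucial geometric fact is that for any fixed nonzero $(\lambda_1,\dots,\lambda_n)\in\mathbb{F}^n$ and any nondegenerate boxes $I_{s_1}\times\cdots\times I_{s_n}$, the nonvanishing linear functional $(y_1,\dots,y_n)\mapsto\sum_j\lambda_j y_j$ is not constant on the box, so we can choose \emph{strictly smaller} nondegenerate subintervals $I'_{s_j}\subset I_{s_j}$ on which this functional avoids $0$ entirely (its image is an interval of positive length, and we simply slide off the value $0$). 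Because the set of threats is countable, I would process them one at a time in a bookkeeping enumeration, at each step refining the currently chosen intervals to eliminate one more relation while preserving the tree structure (disjointness, ordering, and shrinking diameters); the nested intersection of all these refinements is still a Cantor set, and by construction no surviving finite subset can satisfy any nontrivial $\mathbb{F}$-relation.

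The delicate bookkeeping point—which is precisely what Mycielski's scheme automates—is to interleave the two requirements: at each level we must both keep diameters shrinking (for (i) and (iii)) and discharge the next threat on the list (for (ii)), all while never collapsing an interval to a point. I would emphasize that countability of $\mathbb{F}$ (hence of the threat list) is exactly what makes a single nested construction succeed; the positive-length ``escape'' from the forbidden value $0$ is always available because a nonzero $\mathbb{F}$-linear functional is nonconstant on any nondegenerate box. Thus the only genuine work is organizing the simultaneous recursion, and I would either present it directly as a diagonal construction over the enumerated threats or, more economically, cite Mycielski's theorem in the form stated in Kechris \cite[19.2]{Ke1995}, noting that the property ``no nontrivial $\mathbb{F}$-linear relation'' is exactly a countable family of closed nowhere dense relations of the type that theorem is designed to avoid on a perfect set.
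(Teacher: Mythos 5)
Your proposal is correct and follows essentially the same route as the paper, which does not prove Theorem~\ref{thm:PerfectFIndependence} from scratch but attributes it to von Neumann and Mycielski's Cantor-type interval-splitting scheme (citing \cite{N}, \cite{Mycielski1967} and \cite[19.2]{Ke1995}) in exactly the form you sketch: a dyadic tree of shrinking closed intervals refined along a countable enumeration of forbidden nonzero $\mathbb{F}$-linear relations, each killed by sliding sub-boxes off the corresponding hyperplane. Your elaboration of the bookkeeping and of the measure-zero/nowhere-dense bookends is consistent with, and slightly more detailed than, what the paper records.
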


We now combine the previous ingredients to build the desired
noncontinuous bisymmetric operations.

\begin{thm}\label{thm:alphabeta-counterexample}
Let $0<\alpha\le\beta$ be real numbers such that $\alpha+\beta\neq1$.
Then there exist a proper interval $I\subset\R$ and a
partially strictly increasing bisymmetric function $F:I^2\to I$ that is not continuous.
More precisely, for every $x_0,y_0\in I$ the sections
$F(x_0,\cdot)$ and $F(\cdot,y_0)$ are discontinuous.
\end{thm}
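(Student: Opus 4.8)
The plan is to put $F$ into the quasi--sum form of Lemma~\ref{lem:weighted-quasisum}, so that bisymmetry and partial strict monotonicity become automatic, and to spend all the effort on manufacturing the one--variable generator $f$. Concretely, I would look for a proper interval $I$, a Lebesgue--null set $H\subset(0,\infty)$ closed under the map $T(s,t):=\alpha s+\beta t$, and a strictly increasing bijection $f\colon I\to H$; then $F(x,y):=f^{-1}(\alpha f(x)+\beta f(y))$ lands in $I$ by closure of $H$, and Lemma~\ref{lem:weighted-quasisum}(i),(ii) yields bisymmetry and partial strict monotonicity at once (with symmetry when $\alpha=\beta$, by part (iv)). To prepare the raw material I would fix the countable field $\mathbb F:=\mathbb Q(\alpha,\beta)$ and apply Theorem~\ref{thm:PerfectFIndependence} with $\mathbb F$ and a small closed interval $I_0\subset(0,\infty)$, obtaining a perfect, nowhere dense, null set $K$ whose finite subsets are linearly independent over $\mathbb F$. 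Positivity of $I_0$ keeps every value $\alpha s+\beta t$ positive and puts us in the regime of the escape lemma.

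The guiding idea for $H$ is to close $K$ under $T$, i.e. to set $S_0:=K$, $S_{m+1}:=\alpha S_m+\beta S_m$, and $H:=\bigcup_{m\ge0}S_m$. The linear independence of $K$ over $\mathbb F$ (which contains $\alpha,\beta$) forces the representation of each element of $H$ as a positive $\mathbb F$--combination of finitely many points of $K$ to be \emph{unique}; this freeness is what I would use to transport the order of $I$ to $H$ coherently and to rule out accidental coincidences among iterated combinations. Two properties of $H$ then must be secured. First, $H$ must be \emph{null}: this is delicate, because sumsets of null sets need not be null (for the standard Cantor set $C$ one has $C+C=[0,2]$), so I would have to take $K$ genuinely thin -- of box/Hausdorff dimension zero, which the Mycielski interval--splitting scheme permits -- so that every finite iterate $S_m$, being a Lipschitz image of $K^{2^m}$, again has dimension zero and hence measure zero, whence $H=\bigcup_m S_m$ is null. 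Second, I would invoke Lemma~\ref{lem:escape} to locate the iterates: each $S_m$ sits in $[\lambda^m L_0,\lambda^m U_0]$ with $\lambda=\alpha+\beta\ne1$, so only finitely many scales meet any fixed compact subinterval, which keeps $H$ from filling up an interval.

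The main obstacle, I expect, is the order type. A strictly increasing bijection $f\colon I\to H$ exists only if $H$ is order--isomorphic to an interval, i.e. order--dense in itself and Dedekind complete with extreme elements, and the naive union $\bigcup_m S_m$ typically fails this: if the scales $[\lambda^m L_0,\lambda^m U_0]$ are disjoint, then $H$ splits into separated bands and has the wrong order type. The real work is therefore to choose $I_0$ and to organize the construction so that the successive iterates \emph{interleave and fill the gaps} of the earlier ones, producing a set that is simultaneously order--dense, order--complete, and null. This is precisely the tension at the heart of the theorem: $H$ must be rich enough to carry the order type of an interval yet so thin that it contains no interval, and reconciling these demands -- using the unique--representation property from $\mathbb F$--independence together with the scaling control from Lemma~\ref{lem:escape} -- is where the genuine difficulty lies. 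I would treat the verification that the resulting $H$ is order--isomorphic to a proper interval (via Cantor's characterization of the order type of a bounded interval) as the technical core.

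Once $f\colon I\to H$ is in place, discontinuity of every section is cheap. Since $H$ is null it contains no subinterval, so the strictly increasing $f$ cannot be continuous on any subinterval of $I$ (a continuous strictly increasing map would carry an interval to an interval); as a monotone function has only countably many discontinuities, the discontinuity set of $f$ is dense in $I$. For fixed $y_0$ the affine map $A(s)=\alpha s+\beta f(y_0)$ is a homeomorphism of $\mathbb R$ carrying $H$ into $H$, and $f^{-1}\colon H\to I$ is a strictly increasing surjection onto an interval, hence continuous; thus $g:=f^{-1}\circ A|_{H}\colon H\to I$ is a continuous, strictly increasing injection, and the section $x\mapsto F(x,y_0)=g\bigl(f(x)\bigr)$ is discontinuous at precisely the discontinuity points of $f$. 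These form a dense, nonempty set, so $F(\cdot,y_0)$ is discontinuous for every $y_0$, and by the symmetric roles of the two variables the same holds for $F(x_0,\cdot)$; this gives the stated sectionwise discontinuity and completes the plan.
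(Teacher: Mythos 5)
Your overall architecture matches the paper's (quasi-sum form via Lemma~\ref{lem:weighted-quasisum}, an $\mathbb{F}$-independent perfect set from Theorem~\ref{thm:PerfectFIndependence}, an iterated closure $H$, Lemma~\ref{lem:escape} to control the scales), but there are two genuine gaps, both located exactly where you place the ``technical core''. First, the order-type problem cannot be resolved the way you propose. You ask for an $H$ that is simultaneously closed (and it is: by the escape lemma it locally stabilizes to a finite union of compacta), order-dense in itself, Dedekind complete, and null; but a closed bounded subset of $\R$ that is order-dense in itself must be an interval, since every complementary gap of a closed set has both endpoints in the set and these form an adjacent pair in its order. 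No amount of ``interleaving'' of the iterates can reconcile this with nowhere density. The paper's resolution is different: keep $H$ closed and nowhere dense, delete the set $R$ of \emph{right endpoints} of its complementary intervals, and realize the strictly increasing bijection $f:I\to H':=H\setminus R$ as the generalized inverse of the devil's-staircase function attached to $H$. This deletion creates an obligation you do not anticipate: one must verify that $\alpha f(x)+\beta f(y)$ never lands in the deleted set $R$ (the paper perturbs $y$ slightly downward to push the value into the gap $(a,b)$, contradicting $\alpha H+\beta H\subseteq H$). Relatedly, with $S_{m+1}:=\alpha S_m+\beta S_m$ and $H=\bigcup_m S_m$ you do not actually get closure under $T$, since $\alpha S_m+\beta S_{m'}$ with $m\neq m'$ lies in no $S_j$; one needs the cumulative iteration $H_{i+1}=H_i\cup(\alpha H_i+\beta H_i)$.

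Second, your discontinuity argument does not go through. You write the section as $g\circ f$ with $g=f^{-1}\circ A|_{H}$ continuous and conclude it is discontinuous precisely where $f$ is. But continuity of $g$ at $f(x)$ only controls sequences converging to $f(x)$, whereas if $f$ has a left jump at $x$ the relevant arguments $\alpha f(t)+\beta f(y_0)$, $t\uparrow x$, converge to $\alpha L+\beta f(y_0)$ with $L:=\lim_{t\uparrow x}f(t)<f(x)$, a point bounded away from $\alpha f(x)+\beta f(y_0)$. Nothing a priori prevents $f^{-1}$ of these arguments from increasing all the way up to $F(x,y_0)$; that would merely force $f$ to have a compensating left jump at the point $F(x,y_0)$, which is consistent with everything you have established. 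So discontinuity of $f$ does not transfer to the section by composition with a continuous increasing map. The paper instead argues globally: if a section were continuous, its range would be a tail interval of $I$, so every sufficiently large $w\in H'$ would have to equal $\alpha f(x_0)+\beta f(y)$ for some $y$; the unique nonnegative $\mathbb{F}_0$-representation then forces the coefficient of a fixed generator $u\in H_0$ (chosen with positive coefficient in $f(x_0)$) to be positive in $w$, while $H'$ contains arbitrarily large elements whose representation omits $u$ entirely. This surjectivity-plus-coefficient-comparison is the essential missing ingredient; your unique-representation observation points in the right direction but is never actually deployed for this purpose.
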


\begin{proof}
We treat in detail the case $\alpha+\beta>1$; the case $\alpha+\beta<1$
is analogous, with the roles of large and small values reversed (see \textbf{(8)}).

\medskip
\noindent\textbf{(1) Choice of the base field and the set $H_0$.}
Let $\mathbb{F}_0$ be the countable subfield of $\R$ generated by $\Q$ and
$\{\alpha,\beta\}$:
\[
  \mathbb{F}_0:=\Q(\alpha,\beta).
\]
By Theorem~\ref{thm:PerfectFIndependence} applied with $\mathbb{F}=\mathbb{F}_0$ and
$I_0=[1,2]$, there exists a set $H_0\subset[1,2]$ such that:
\begin{enumerate}[(a)]
  \item $H_0$ is perfect (in particular, closed and has no isolated points);
  \item every finite subset of $H_0$ is linearly independent over $\mathbb{F}_0$;
  \item $H_0$ contains no interval.
\end{enumerate}
In particular, $H_0$ is uncountable and nowhere dense.

\medskip
\noindent\textbf{(2) Definition set $H$ by iteration and its properties.}
Define inductively
\[
  H_0\text{ as above},\qquad
  H_{i+1}:=H_i\cup(\alpha H_i+\beta H_i),\qquad i\ge0,
\]
where
\[
  \alpha H_i+\beta H_i:=\{\alpha x+\beta y:x,y\in H_i\}.
\]
Set
\[
  H:=\bigcup_{i=0}^\infty H_i.
\]

Equivalently, one can describe the layers by
\[
  H_i=\bigcup_{n=0}^i K_n,
\]
where
\[
  K_n:=\sum_{j=0}^n\alpha^j\beta^{n-j} H_0
  =\left\{\sum_{j+k= n}\alpha^j\beta^{k} u_{j,k}:u_{j,k}\in H_0\right\}.
\]

\begin{claim}\label{cl:algebraic}
Every $h\in H$ admits a finite representation
\[
  h=\sum_{l=1}^m c_l u_l,
\]
with $u_l\in H_0$ pairwise distinct and $c_l\in \mathbb{F}_0$, $c_l\ge0$.
Moreover, this representation is unique.
\end{claim}

\begin{proof}
We prove the existence by induction on $i$ for elements of $H_i$.

For $i=0$ the claim is trivial, since each $u\in H_0$ equals $1\cdot u$.
Uniqueness in $H_0$ follows from the linear independence of finite subsets of $H_0$ over $\mathbb{F}_0$.

Assume that the statement holds for $H_i$ and take $h\in H_{i+1}$.
If $h\in H_i$, we are done. Otherwise $h\in \alpha H_i+\beta H_i$, so
$h=\alpha x+\beta y$ for some $x,y\in H_i$.
By the induction hypothesis,
\[
  x=\sum_{l=1}^m a_l u_l,\qquad y=\sum_{l=1}^m b_l u_l,
\]
after passing to a common list of pairwise distinct $u_l\in H_0$, with
$a_l,b_l\in \mathbb{F}_0$ and $a_l,b_l\ge0$.
Then
\[
  h=\alpha x+\beta y=\sum_{l=1}^m (\alpha a_l+\beta b_l)u_l,
\]
and the coefficients $\alpha a_l+\beta b_l$ lie in $\mathbb{F}_0$ and are nonnegative.
This proves existence.

For uniqueness, suppose
\[
  \sum_{l=1}^m c_l u_l=\sum_{l=1}^m c'_l u_l
\]
with pairwise distinct $u_l\in H_0$ and $c_l,c'_l\in \mathbb{F}_0$.
Then $\sum_{l=1}^m (c_l-c'_l)u_l=0$.
Since every finite subset of $H_0$ is linearly independent over $\mathbb{F}_0$, we
get $c_l=c'_l$ for all $l$. Hence the representation is unique.
\end{proof}

\medskip
\noindent\textbf{(3) The sets $K_n$, $H_i$ and $H$ do not contain any interval.}

Fix $n\ge0$ and consider $K_n=\sum_{j+k=n}\alpha^j\beta^k H_0$.

\begin{claim}\label{cl:Kn-noninterval}
Each $K_n$ is a compact perfect set and contains no nondegenerate interval.
\end{claim}

\begin{proof}
\emph{Compactness.}
The set $H_0$ is compact, hence $H_0^{n+1}$ is compact.
The map
\[
  \Psi_n:H_0^{n+1}\to\R,\qquad
  \Psi_n\big((u_{j,k})_{j+k=n}\big)=\sum_{j+k=n}\alpha^j\beta^k u_{j,k}
\]
is continuous, so $K_n=\Psi_n(H_0^{n+1})$ is compact.

\emph{No isolated points.}
Fix $x\in K_n$. Then $x=\sum_{j+k=n}\alpha^j\beta^k u_{j,k}$ for some $u_{j,k}\in H_0$.
Choose one index $(j_0,k_0)$.
Since $H_0$ is perfect, there exists a sequence $u_{j_0,k_0}^{(m)}\in H_0$,
$u_{j_0,k_0}^{(m)}\neq u_{j_0,k_0}$, with $u_{j_0,k_0}^{(m)}\to u_{j_0,k_0}$.
Keeping all other $u_{j,k}$ fixed and setting
\[
  x_m:=\sum_{j+k=n}\alpha^j\beta^k u_{j,k}^{(m)},
\]
we obtain a sequence $x_m\in K_n$, $x_m\neq x$, with $x_m\to x$ by
continuity of $\Psi_n$. Thus $x$ is not isolated.

\emph{No interval.}
Let $J\subset\R$ be a nondegenerate open interval and choose $y\in K_n\cap J$.
Write $y=\sum_{l=1}^m c_l u_l$ as in Claim~\ref{cl:algebraic}, with
$u_l\in H_0$, $c_l\in \mathbb{F}_0$, $c_l\ge0$.
Because $H_0$ is infinite and only finitely many $u_l$ appear, pick
$u'\in H_0$ distinct from all $u_l$.

Since $J$ is open and contains $y$, there exists $\varepsilon>0$ such that
$(y-\varepsilon,y+\varepsilon)\subset J$.
Because $\Q\subset \mathbb{F}_0$, we can choose $t\in \mathbb{F}_0$ with $t<0$ and
$|t|<\varepsilon/|u'|$ (note that $u'\in[1,2]$, so $|u'|\ge1$).
Then $y':=y+t u'\in J$.

If $y'\in K_n$, then $y'$ admits a representation with nonnegative
$\mathbb{F}_0$--coefficients supported on finitely many elements of $H_0$.
However,
\[
  y'=\sum_{l=1}^m c_l u_l + t u'
\]
is an $\mathbb{F}_0$--linear combination of pairwise distinct points of $H_0$ whose
coefficient at $u'$ equals $t<0$.
By uniqueness in Claim~\ref{cl:algebraic} this is impossible, hence
$y'\notin K_n$. Therefore $J$ cannot be contained in $K_n$, and $K_n$
has empty interior.
Being compact and having no isolated points, $K_n$ is a perfect nowhere dense set.
\end{proof}

Since each $H_i$ is a finite union of sets $K_n$ with $n\le i$, it follows
that every $H_i$ is also compact, perfect, and nowhere dense.

Since $\alpha+\beta>1$ and $H_0\subset[1,2]$, Lemma~\ref{lem:escape} can be applied with $k=2$,  $S_n:=K_n$, which
shows that for every $M>1$ there exists $m(M)$ such that
\[
K_{m}\cap[1,M]=\emptyset \qquad\text{for all } m> m(M).
\]
Consequently, for all $m\ge m(M)$ we have
\[
H_m\cap[1,M]=H_{m(M)}\cap[1,M],
\]
and therefore
\[
H\cap[1,M]=H_{m(M)}\cap[1,M].
\]
Since $H_{m(M)}$ is a finite union of compact nowhere dense sets, it follows that
$H\cap[1,M]$ is nowhere dense in $[1,M]$.
In particular, the intersections $H\cap[1,M]$ stabilize after finitely many steps, which also implies that $H$ is closed.

\medskip
\noindent\textbf{(4) Removing boundary points and defining $H'$.}

Write
\[
  [1,\infty)\setminus H=\bigcup_k (a_k,b_k)
\]
as the disjoint union of the intervals contiguous to $H$.
(Here each $(a_k,b_k)$ is a maximal open interval contained in $[1,\infty)\setminus H$.)
Then $a_k,b_k\in H$ for every $k$ and the family $\{(a_k,b_k)\}_k$ is countable
(e.g.\ each $(a_k,b_k)$ contains a rational number, and the intervals are disjoint).

Let $L=\{a_k\}$ and $R=\{b_k\}$ denote the sets of left and right endpoints,
respectively; both are countable.

From now on we fix the convention
\[
  H':=H\setminus R,
\]
(that is, we remove all right endpoints; the alternative choice $H\setminus L$ is analogous).
Then $H'$ is still uncountable, nowhere dense and contains no nondegenerate interval.
Moreover, $H'$ has no isolated points.

\medskip
\noindent
\textbf{Devil's staircase and its generalized inverse.}
It is a standard fact (Cantor-function construction for closed nowhere dense sets) that
there exists a continuous nondecreasing surjection
\[
  \varphi:[0,1]\to[0,1]
\]
which is constant on each open interval contiguous to $\widetilde H$ and strictly increasing on $\widetilde H$,
where $\widetilde H\subset[0,1]$ is a closed nowhere dense set.
In our situation we obtain such a $\widetilde H$ by transporting $H$ into $[0,1]$ via an increasing homeomorphism.
More precisely, set
\[
  \tau:(1,\infty)\to(0,1),\qquad \tau(t)=1-\frac1t,
\]
and define $\widetilde H:=\overline{\tau(H)}\subset[0,1]$.
(Appending the boundary points does not affect the construction below.)
Applying the standard construction to $\widetilde H$, we get such a $\varphi$.

Define the generalized inverse
\[
  g(s):=\inf\{x\in[0,1]:\varphi(x)=s\},\qquad s\in[0,1].
\]
Then $g$ is strictly increasing and its image equals $\widetilde H\setminus \widetilde R$,
where $\widetilde R$ is the set of right endpoints of the complementary intervals of $\widetilde H$.
(Equivalently, using $\sup$ instead of $\inf$ yields the version corresponding to removing left endpoints.)
Finally, transporting back by $\tau^{-1}$ we obtain a strictly increasing bijection
\[
  f:(1,\infty)\to H' ,\qquad f(t):=\tau^{-1}\big(g(\tau(t))\big).
\]

Accordingly, we fix such a strictly increasing bijection $f$.
In the present case $\alpha+\beta>1$ we take $I=(1,\infty)$ as the domain of $f$,
and if convenient one may also choose a half-open variant such as $I=[1,\infty)$;
this choice will not play any role below.

\medskip
\noindent\textbf{(5) Defining the operation $F$.}
Define
\begin{equation}\label{eq:F-def}
  F(x,y):=f^{-1}\big(\alpha f(x)+\beta f(y)\big),\qquad (x,y)\in I^2.
\end{equation}
We must verify that $\alpha f(x)+\beta f(y)\in H'$ for all $(x,y)\in I^2$,
so that $F$ is well-defined.

By construction,
\[
  f(I)=H',\qquad H' \subset H,\qquad
  \alpha H + \beta H \subset H,
\]
hence $\alpha f(x)+\beta f(y)\in H$ for all $x,y\in I$.
It remains to rule out that this value lands in the removed endpoint set $R$.

Assume for a contradiction that there exist $x,y\in I$ such that
\[
  \alpha f(x)+\beta f(y)=b\in R.
\]
Then $b$ is the right endpoint of some interval $(a,b)\subset[1,\infty)\setminus H$.
Since $f(y)\in H'$ and $I=(1,\infty)$, the point $f(y)$ is not isolated in $H'$ and is not a minimum;
hence we can choose $y'\in I$ with $f(y')\in H'$ and $0<f(y)-f(y')<\frac{b-a}{\beta}$.
For this choice we have
\[
  \alpha f(x)+\beta f(y')
  = b-\beta\bigl(f(y)-f(y')\bigr)\in(a,b)\subset [1,\infty)\setminus H,
\]
which contradicts $\alpha H+\beta H\subset H$ because $f(x),f(y')\in H\,$.
Therefore $\alpha f(x)+\beta f(y)\notin R$ for all $x,y\in I$, and hence
\[
  \alpha f(x)+\beta f(y)\in H\setminus R=H'
\]
for all $(x,y)\in I^2$. Consequently, \eqref{eq:F-def} indeed defines a well-defined function
$F:I^2\to I$.

\smallskip
\noindent
(The alternative convention $H':=H\setminus L$ is handled analogously; in that case one uses the
corresponding $\sup$--version of the generalized inverse in Step \textbf{(4)}.)

\medskip
\noindent\textbf{(6) Basic properties of $F$.}
By Lemma~\ref{lem:weighted-quasisum} we immediately obtain:
\begin{itemize}
\item $F$ is bisymmetric;
\item as $\alpha,\beta>0$, $F$ is partially strictly increasing;
\item $F$ is non-reflexive because $\alpha+\beta\ne1$;
\item if $\alpha=\beta$, then $F$ is symmetric;
\item if $\alpha=\beta=1$, then $F$ is associative.
\end{itemize}

\medskip
\noindent\textbf{(7) Discontinuity of sections.}
Fix $x_0\in I$ and consider $G(y):=F(x_0,y)$.
Assume, for contradiction, that $G$ is continuous and strictly increasing on $I$.

Then $G(I)$ is an interval of the form $(z_*,\infty)$ or $[z_*,\infty)$.
In particular, there exists $Z>z_*$ such that for every $z\in I$ with $z\ge Z$
there exists $y\in I$ satisfying
\[
  z=F(x_0,y)
  \quad\Longleftrightarrow\quad
  f(z)=\alpha f(x_0)+\beta f(y).
\]
Thus every sufficiently large element of $H'$ must admit such a representation.

Write
\[
  f(x_0)=\sum_{l=1}^m c_l u_l,
  \qquad u_l\in H_0,\quad c_l\in \mathbb{F}_0,\ c_l\ge0,
\]
and choose an index $l_0$ with $c_{l_0}>0$, denoting $u:=u_{l_0}$.
Since $H_0$ is infinite, pick $v\in H_0$ distinct from all $u_l$.
By the construction of $H$ in the expanding regime $\alpha+\beta>1$, there exist
arbitrarily large elements $w\in H'$ whose unique $\mathbb{F}_0$--representation
has coefficient $0$ at $u$ (e.g.\ elements of $H$ generated from $H_0$ using only generators
different from $u$, hence never involving $u$ in the representation).

Fix such a $w\in H'$ with $w>f(Z)$, and set $z:=f^{-1}(w)\in I$, so that $z>Z$ and $f(z)=w$.

If $z=F(x_0,y)$ for some $y\in I$, then
\[
  f(z)=\alpha f(x_0)+\beta f(y),
\]
so the coefficient of $u$ on the right-hand side equals
\[
  \alpha\,c_{l_0}
    + \beta\cdot (\text{coefficient of $u$ in }f(y))
  >0,
\]
because $c_{l_0}>0$ and all coefficients appearing in the representation of $f(y)$ are nonnegative
(Claim~\ref{cl:algebraic}).
This contradicts the fact that the coefficient of $u$ in $f(z)=w$ is $0$.

Therefore, $G$ cannot be continuous on any tail of $I$.
The same argument applied in the first variable shows that for each fixed $y_0\in I$
the section $x\mapsto F(x,y_0)$ is also discontinuous.

\medskip
\noindent\textbf{(8) The case $\alpha+\beta<1$.}
Assume now that $0<\alpha\le\beta$ and $\alpha+\beta<1$.
We repeat the construction on a bounded interval: fix $I_0=[\tfrac12,1]$ and choose
$H_0\subset I_0$ with the same properties as in \textbf{(1)}.

Define $H_i$, $K_n$ and $H=\bigcup_{i\ge0}H_i$ exactly as in \textbf{(2)}.
Since $\alpha+\beta<1$ and $H_0\subset(0,1]$, Lemma~\ref{lem:escape} implies that for every $\delta\in(0,1]$
there exists $m(\delta)\in\N$ such that
\[
K_m\cap[\delta,1]=\emptyset
\qquad\text{for all } m> m(\delta).
\]
Consequently, the same stabilization argument as in \textbf{(3)} shows that $H$ is closed,
perfect and nowhere dense in the ambient interval $(0,1]$.

Now perform the endpoint removal as in \textbf{(4)} (removing right endpoints), obtaining
$H':=H\setminus R$, and fix a strictly increasing bijection $f:I\to H'$ with $I=(0,1)$
(or a half-open variant, if convenient). The operation $F$ is defined by \eqref{eq:F-def}
and is well-defined by the same argument as in \textbf{(5)}.

Finally, the discontinuity argument from \textbf{(7)} applies verbatim, with ``large'' replaced by ``small''.
Indeed, in the contracting regime $\alpha+\beta<1$ the set $H'$ accumulates at $0$, so one can choose
arbitrarily small values $w\in H'$ whose $\mathbb{F}_0$--representation has zero coefficient at a fixed
$u\in H_0$ chosen from the representation of $f(x_0)$.
Assuming continuity of a section would force all sufficiently small values in $H'$ to be representable
as $\alpha f(x_0)+\beta f(y)$, which contradicts the same coefficient comparison as in \textbf{(7)}.

\medskip
This completes the proof of Theorem~\ref{thm:alphabeta-counterexample}.
\end{proof}

\begin{rem}\label{rem:sections-and-variants}
The construction in Theorem~\ref{thm:alphabeta-counterexample} admits several
refinements.
\begin{enumerate}[(i)]
  \item For a fixed $x_0\in I$ we did not only produce a single discontinuity
  point of the section $G(y):=F(x_0,y)$.
  Indeed, for every $T$ sufficiently large there exists $z>T$ with the
  property that the coefficient of the chosen $u\in H_0$ in the
  representation of $f(z)$ is $0$.
  For each such $z$ the above argument shows that $G$ cannot be continuous
  at any $y$ with $G(y)=z$.
  Hence $G$ has infinitely many discontinuity points on every tail
  $[T,\infty)\cap I$.
  An analogous statement holds for the horizontal sections $x\mapsto F(x,y_0)$.

  \item In the construction of $F$ the symmetry assumption played no role.
  However, if $\alpha=\beta$, then $F$ is symmetric by
  Lemma~\ref{lem:weighted-quasisum}.
  Therefore, Theorem~\ref{thm:alphabeta-counterexample} also yields
  noncontinuous examples which are not only bisymmetric and partially strictly
  increasing, but symmetric as well.
\end{enumerate}
\end{rem}

The following result is an immediate consequence of
Theorem~\ref{thm:alphabeta-counterexample} and
Lemma~\ref{lem:weighted-quasisum}.
It shows that in Acz\'el’s characterization of quasi-sums
(Theorem~\ref{Aczel_assoc}) the continuity assumption cannot be omitted.
In particular, there exist associative and strictly monotone operations of
quasi-sum type which are not continuous.

\begin{thm}\label{thm:assoc-counterexample}
There exists an associative, partially strictly increasing function
\[
  F:I^2\to I
\]
which is not continuous.
\end{thm}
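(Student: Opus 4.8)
The plan is to specialize the construction of Theorem~\ref{thm:alphabeta-counterexample} to the symmetric weights $\alpha=\beta=1$ and then read off associativity from the algebraic part~(v) of Lemma~\ref{lem:weighted-quasisum}. The key observation is that the single parameter condition governing that construction, namely $\alpha+\beta\neq1$, is compatible with the arithmetic-type weights $\alpha=\beta=1$: indeed $0<1\le1$ and $1+1=2\neq1$, so all hypotheses of Theorem~\ref{thm:alphabeta-counterexample} are met. Thus the two steps to carry out are first to extract a suitable noncontinuous $F$ from the earlier theorem, and second to certify that this particular $F$ is associative.

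First I would invoke Theorem~\ref{thm:alphabeta-counterexample} with $\alpha=\beta=1$. This produces a proper interval $I\subset\R$, a strictly increasing bijection $f\colon I\to H'$ onto the fractal-type set $H'$, and the operation
\[
  F(x,y)=f^{-1}\bigl(f(x)+f(y)\bigr),\qquad (x,y)\in I^2,
\]
which is partially strictly increasing, bisymmetric, and not continuous; in fact every section $F(x_0,\cdot)$ and $F(\cdot,y_0)$ is discontinuous. Second, I would apply Lemma~\ref{lem:weighted-quasisum}(v). Since here $\alpha,\beta\neq0$, that lemma characterizes associativity of $F(x,y)=f^{-1}(\alpha f(x)+\beta f(y))$ by the equivalence $\alpha=\beta=1$. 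As our weights satisfy exactly $\alpha=\beta=1$, the function $F$ is associative. Combining the two steps, $F$ is an associative, partially strictly increasing operation on $I^2$ that is not continuous, which is precisely the assertion of the theorem.

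There is essentially no obstacle to overcome here: all the genuine difficulty—building the perfect, $\mathbb{F}_0$-linearly independent set $H_0$, controlling its iterates via the escape Lemma~\ref{lem:escape} so that $H$ is closed and nowhere dense, and running the coefficient-comparison argument to defeat continuity of every section—is already contained in the proof of Theorem~\ref{thm:alphabeta-counterexample}. The only matters that need checking are bookkeeping ones: that $\alpha=\beta=1$ is an admissible (non-reflexive) parameter for the construction, and that it is exactly the parameter value making the quasi-sum associative. Both are immediate, so the result follows at once.
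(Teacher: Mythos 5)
Your proposal is correct and matches the paper's own proof: both specialize Theorem~\ref{thm:alphabeta-counterexample} to $\alpha=\beta=1$ (noting $1+1\neq 1$, so the construction applies) and then read off associativity from Lemma~\ref{lem:weighted-quasisum}(v). No gaps.
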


\begin{proof}
Apply Theorem~\ref{thm:alphabeta-counterexample} with $\alpha=\beta=1$.
By Lemma~\ref{lem:weighted-quasisum}, for any strictly increasing bijection
$f:I\to H'$ the operation
\[
  F(x,y)=f^{-1}\big(f(x)+f(y)\big)
\]
is associative (and bisymmetric) and partially strictly increasing in each
variable.
The theorem provides such an $F$ which is not continuous on a suitable
interval $I$.
\end{proof}

\begin{rem}[One-point reflexivity in the associative case]\label{rem:onepoint}
In the example of Theorem~\ref{thm:assoc-counterexample} we can, in addition
to associativity, symmetry, and strict monotonicity, also obtain
\emph{one-point reflexivity}.
Choose the initial set $H_0\subset(1,2]$ so that the resulting interval is
$I=(0,\infty)$, and extend $F$ to $[0,\infty)^2$ by
\[
  F(0,0):=0,\qquad F(0,x):=x,\qquad F(x,0):=x\qquad(x>0).
\]
Since on $(0,\infty)$ the operation is partially strictly increasing in each
variable, these extensions preserve strict monotonicity.
Moreover, associativity and symmetry (which hold exactly when $\alpha=\beta=1$)
are preserved by this extension, and these two properties together imply
bisymmetry for any binary operation. Indeed,
\begin{align*}
&F(F(x,y),F(u,v))
= F(F(F(x,y),u),v)=F(F(x,F(y,u)),v)=\\
&F(F(x,F(u,y)),v)=F(F(F(x,u),y),v)=F(F(x,u),F(y,v)).
\end{align*}
Hence, the extended operation remains associative, symmetric, bisymmetric,
and strictly increasing, while it becomes reflexive at the single point~$0$
and remains discontinuous on every nontrivial section.

Consequently, in this case reflexivity at one point does not force continuity.
Note, however, that in this example $F$ is not reflexive at any other point.
In Section~\ref{sec:two-point-refl} we will see that, in the symmetric case,
reflexivity at two distinct points already implies continuity on the entire
segment between them (Theorem~\ref{T:refl2p}).
\end{rem}

\section{Multi-variate extensions}\label{sec:multi}

In this section we briefly describe how the construction from
Theorem~\ref{thm:alphabeta-counterexample} extends to $n$--variable
operations. We use the letter $F$ also in the multivariate setting, as no
confusion will arise.

\subsection{Basic notions for \texorpdfstring{$n$}{}--ary operations}

Let $I$ be a real interval and let $F:I^n\to I$ be an $n$--ary operation.

\begin{itemize}
    \item \emph{Symmetry.}
    The operation $F$ is symmetric if
    \[
      F(x_1,\dots,x_n)=F\bigl(x_{\sigma(1)},\dots,x_{\sigma(n)}\bigr)
    \]
    for every permutation $\sigma\in S_n$.

    \item \emph{Partial strict monotonicity.}
    The operation $F$ is partially strictly increasing if for each fixed
    $(x_1,\dots,x_{i-1},x_{i+1},\dots,x_n)$ the map
    \[
       t\mapsto F(x_1,\dots,x_{i-1},t,x_{i+1},\dots,x_n)
    \]
    is strictly increasing.

    \item \emph{Bisymmetry.}
    The $n$--ary bisymmetry equation is
    \begin{align*}
       &F\bigl(F(x_{1,1},\dots,x_{1,n}),\,F(x_{2,1},\dots,x_{2,n}),\,\dots,\,
                F(x_{n,1},\dots,x_{n,n})\bigr)\\
       &=F\bigl(F(x_{1,1},\dots,x_{n,1}),\,F(x_{1,2},\dots,x_{n,2}),\,\dots,\,
                F(x_{1,n},\dots,x_{n,n})\bigr),
    \end{align*}
    for all $x_{i,j}\in I$.

    \item \emph{Associativity.}
    For $n\ge 2$, there are several equivalent formulations of associativity.
    For the purposes of weighted quasi--sums, one may take
    \[
      F(F(x_1,\dots,x_n),x_{n+1},\dots,x_{2n-1})
      =F(x_1,\dots,x_{n-1},F(x_n,\dots,x_{2n-1}))
    \]
    for all $(x_1,\dots,x_{2n-1})\in I^{2n-1}$.
\end{itemize}

\subsection{Weighted quasi--sums in \texorpdfstring{$n$}{} variables}

Fix real coefficients $\alpha_1,\dots,\alpha_n>0$ and a strictly increasing
bijection $f:I\to H'$, where $H'$ is the perfect nowhere dense set constructed
above.
Define the operation
\begin{equation}\label{eq:multivar-def}
   F(x_1,\dots,x_n)
   :=f^{-1}\!\left(\sum_{i=1}^n \alpha_i\, f(x_i)\right).
\end{equation}

Exactly as in the two--variable case one checks:
\begin{itemize}
   \item $F$ is partially strictly increasing;
   \item $F$ is bisymmetric;
   \item $F$ is symmetric iff $\alpha_1=\dots=\alpha_n$;
   \item $F$ is reflexive iff $\sum_{i=1}^n\alpha_i=1$;
   \item $F$ is associative iff $\alpha_1=\dots=\alpha_n=1$.
\end{itemize}

\subsection{Discontinuous bisymmetric \texorpdfstring{$n$}{}--ary operations}

The analogue of Acz\'el’s theorems on bisymmetric, reflexive, strictly monotone,
continuous functions has been studied in the $n$--ary case---also including
the symmetric situation---by Maksa, Mokken, and M\"unnich~\cite{MaMoMu2000}.
The more general result on continuous, strictly monotone solutions of the
general bisymmetry equation was settled by Maksa~\cite{Ma1999}.
In what follows, by omitting reflexivity and, of course, continuity, we
construct noncontinuous bisymmetric examples in this $n$--variable setting.
Moreover, Acz\'el’s theorem for associative, strictly monotone, continuous
$n$--ary operations was generalized by Couceiro and Marichal~\cite{CouMar2012}; here in Corollary~\ref{cor:assoc-n-ary} we show that in their characterization the continuity assumption cannot be dropped.

First, we consider the $n$--variable analogue of
Theorem~\ref{thm:alphabeta-counterexample}.

\begin{thm}\label{thm:n-ary-counterexample}
Let $n\ge 2$ and let $\alpha_1,\dots,\alpha_n>0$ satisfy
\[
    \sum_{i=1}^n \alpha_i\neq 1.
\]
Then there exists a proper interval $I\subset\R$ and a bisymmetric,
partially strictly increasing $n$--ary operation $F:I^n\to I$ of the form
\eqref{eq:multivar-def} which is not continuous.
Moreover, for every choice of $(n-1)$ fixed variables, the remaining
one--variable section of $F$ is not continuous (in fact, it is not
continuous on any tail of the $I$, hence it has infinitely many points of
discontinuity).
\end{thm}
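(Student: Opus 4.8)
The plan is to follow the binary construction of Theorem~\ref{thm:alphabeta-counterexample} almost verbatim, replacing the two-term combination $\alpha f(x)+\beta f(y)$ by the $n$-term combination $\sum_{i=1}^n\alpha_i f(x_i)$. Writing $\lambda:=\sum_{i=1}^n\alpha_i\neq1$, I would treat first the expanding case $\lambda>1$; the contracting case $\lambda<1$ follows by the same endpoint-reversal as in step \textbf{(8)} of the binary proof, replacing ``large'' by ``small''. I would set $\mathbb{F}_0:=\Q(\alpha_1,\dots,\alpha_n)$, which is still a countable subfield, and apply Theorem~\ref{thm:PerfectFIndependence} with $\mathbb{F}=\mathbb{F}_0$ and $I_0=[1,2]$ to obtain a perfect, nowhere dense set $H_0$ all of whose finite subsets are linearly independent over $\mathbb{F}_0$.

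Next I would build $H$ by the iteration $H_{i+1}:=H_i\cup\bigl(\alpha_1 H_i+\cdots+\alpha_n H_i\bigr)$ and $H:=\bigcup_{i}H_i$, with the layer sets now indexed by multinomial (rather than binomial) expansions of the generators. The three structural facts transfer directly: the analogue of Claim~\ref{cl:algebraic} (every $h\in H$ has a unique representation as a nonnegative $\mathbb{F}_0$-combination of pairwise distinct points of $H_0$) holds because the existence induction combines the $n$ representations of the arguments while preserving nonnegativity in $\mathbb{F}_0$, and uniqueness is again immediate from $\mathbb{F}_0$-linear independence; the analogue of Claim~\ref{cl:Kn-noninterval} holds because each layer is a continuous image of a suitable power of $H_0$, hence compact, has no isolated points (perturb one generator), and contains no interval (perturb a representative by a small negative $\mathbb{F}_0$-multiple of a fresh generator, contradicting uniqueness); and Lemma~\ref{lem:escape} applies with $k=n$ and all $\alpha_i>0$, giving the stabilization $H\cap[1,M]=H_{m(M)}\cap[1,M]$, so that $H$ is closed, perfect and nowhere dense. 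I would then remove the right endpoints to form $H':=H\setminus R$, transport via the devil's-staircase construction to a strictly increasing bijection $f:I\to H'$ with $I=(1,\infty)$, and define $F$ by \eqref{eq:multivar-def}. Well-definedness---that $\sum_i\alpha_i f(x_i)$ never lands in the removed set $R$---is verified exactly as in step \textbf{(5)}: were the value equal to a right endpoint $b$, perturbing one argument slightly downward would push it into a contiguous gap of $H$, contradicting $\alpha_1 H+\cdots+\alpha_n H\subset H$.

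The algebraic properties---bisymmetry and partial strict monotonicity, together with the reflexivity, symmetry and associativity dichotomies---are exactly those recorded in the $n$-ary list of Section~\ref{sec:multi}, and none of them uses continuity. The core of the theorem is the discontinuity of the one-variable sections, which I would obtain by the coefficient comparison of step \textbf{(7)}. Fixing all but one coordinate, say $x_1,\dots,x_{n-1}$, set $G(t):=F(x_1,\dots,x_{n-1},t)$. Each $f(x_i)>0$ has a nonempty nonnegative $\mathbb{F}_0$-representation, so some generator $u\in H_0$ occurs with strictly positive coefficient in the fixed part $\sum_{i<n}\alpha_i f(x_i)$; since $\alpha_n>0$ and all coefficients of $f(t)$ are nonnegative, the coefficient of $u$ in every output $f(G(t))=\sum_{i<n}\alpha_i f(x_i)+\alpha_n f(t)$ is strictly positive. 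In the expanding regime $H'$ nonetheless contains arbitrarily large elements $w$ built only from generators distinct from $u$, hence with zero coefficient at $u$. If $G$ were continuous and strictly increasing, its image would contain a tail of $I$ and therefore some such $w$; but $w=f(G(t))$ would force a positive coefficient at $u$, a contradiction. Thus each such section is discontinuous on every tail, yielding infinitely many discontinuities, and by the symmetric role of the coordinates the same holds for every choice of fixed $(n-1)$ variables.

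I do not expect a genuinely new difficulty here: the mechanism---uniqueness of the nonnegative $\mathbb{F}_0$-representation, the ``fresh generator'' perturbation, and the escape lemma---is identical to the binary case. The only point requiring care is the bookkeeping of the multi-index layers $K_n$ and confirming that Lemma~\ref{lem:escape} is invoked with $k=n$ positive coefficients summing to $\lambda\neq1$; once that is in place, every estimate and every coefficient comparison carries over unchanged, and the proof reduces to checking that each step of Theorem~\ref{thm:alphabeta-counterexample} generalizes, which it does.
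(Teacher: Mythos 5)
Your proposal is correct and follows essentially the same route as the paper's own (sketched) proof: the same base field $\Q(\alpha_1,\dots,\alpha_n)$, the same iterated set $H$ with multi-index layers, the same invocation of Lemma~\ref{lem:escape} with $k=n$, the same endpoint-removal and well-definedness argument, and the same coefficient-vanishing argument for discontinuity of sections. No meaningful differences to report.
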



\begin{proof}[Sketch of proof]
We only indicate the modifications relative to the proof of
Theorem~\ref{thm:alphabeta-counterexample}.
We treat in detail the case $\sum_{i=1}^n\alpha_i>1$; the case
$\sum_{i=1}^n\alpha_i<1$ is analogous, with the roles of large and small
values reversed (see \textbf{(8)}).

\medskip
\noindent\textbf{(1) Choice of the base field and the set $H_0$.}
Let
\[
  \mathbb{F}_0:=\Q(\alpha_1,\dots,\alpha_n).
\]
As in \textbf{(1)} of the two--variable proof (using the same base interval
$I_0=[1,2]$), choose a perfect nowhere dense set $H_0\subset[1,2]$ such that
every finite subset of $H_0$ is linearly independent over $\mathbb{F}_0$ and
$H_0$ contains no interval.

\medskip
\noindent\textbf{(2) Definition set $H$ by iteration and its properties.}
Define inductively
\[
  H_0\text{ as above},\qquad
  H_{i+1}:=H_i\cup\Bigl\{\sum_{k=1}^n \alpha_k x_k:\ x_1,\dots,x_n\in H_i\Bigr\},
  \qquad i\ge0,
\]
and set $H:=\bigcup_{i=0}^\infty H_i$.

Equivalently, one can describe the layers as follows.
For $\mathbf{e}=(e_1,\dots,e_n)\in\N_0^n$ write
$|\mathbf{e}|:=\sum_{k=1}^n e_k$ and $\alpha^{\mathbf{e}}:=\prod_{k=1}^n\alpha_k^{e_k}$,
and set
\[
  K_m:=\sum_{|\mathbf{e}|=m}\alpha^{\mathbf{e}}H_0
  =\left\{\sum_{|\mathbf{e}|=m}\alpha^{\mathbf{e}}\,u_{\mathbf{e}}:\ u_{\mathbf{e}}\in H_0\right\},
  \qquad m\ge0.
\]
Then $H_i=\bigcup_{m=0}^i K_m$.
Moreover, the analogue of Claim~\ref{cl:algebraic} holds verbatim: every
$h\in H$ has a unique representation $\sum_l c_l u_l$ with pairwise distinct
$u_l\in H_0$ and $c_l\in\mathbb{F}_0$, $c_l\ge0$.

\medskip
\noindent\textbf{(3) The sets $K_n$, $H_i$ and $H$ do not contain any interval.}
The proof of Claim~\ref{cl:Kn-noninterval} extends directly, hence each
$K_m$ is compact, perfect, nowhere dense and contains no interval; therefore
each $H_i$ contains no interval as well.

In addition, the ``escape'' part is used exactly as in the two--variable proof
but for the layers: since $\sum_{i=1}^n\alpha_i>1$ and $H_0\subset[1,2]$,
Lemma~\ref{lem:escape} implies that for every $M>1$ there exists $m(M)$ such that
\[
  K_m\cap[1,M]=\emptyset \qquad\text{for all } m> m(M).
\]
Hence $H\cap[1,M]=H_{m(M)}\cap[1,M]$, so these intersections stabilize and $H$ is closed.

\medskip
\noindent\textbf{(4) Removing boundary points and defining $H'$.}
Repeat \textbf{(4)} of the two--variable proof: remove all right endpoints of
intervals contiguous to $H$ in $[1,\infty)$ and set $H':=H\setminus R$.
Fix a strictly increasing bijection $f:I\to H'$ (with $I=(1,\infty)$ in the present case).

\medskip
\noindent\textbf{(5) Defining the operation $F$.}
Define
\[
  F(x_1,\dots,x_n)
  :=f^{-1}\!\left(\sum_{k=1}^n \alpha_k f(x_k)\right),\qquad (x_1,\dots,x_n)\in I^n.
\]
Well-definedness is checked exactly as in \textbf{(5)} of the two--variable proof:
if $\sum_{k=1}^n\alpha_k f(x_k)=b\in R$ is the right endpoint of a gap $(a,b)$, then
adjust one variable $x_j$ slightly down within $H'$ so that the value lands in $(a,b)$,
contradicting $\sum_{k=1}^n\alpha_k H\subset H$.

\medskip
\noindent\textbf{(6) Basic properties of $F$.}
As in \textbf{(6)} of the two--variable proof, $F$ is bisymmetric and partially
strictly increasing (since all $\alpha_k>0$), and it is non-reflexive because
$\sum_{k=1}^n\alpha_k\ne1$.

\medskip
\noindent\textbf{(7) Discontinuity of sections.}
Fix $(n-1)$ variables and consider the remaining section, e.g.
$G(y):=F(y,x_2,\dots,x_n)$.
If $G$ were continuous on a tail, then its range would contain an interval and hence it
would represent all sufficiently large values in $H'$ on that tail.
Repeating the coefficient--vanishing argument from \textbf{(7)} of the two--variable proof
(using uniqueness of the $\mathbb{F}_0$--representation and nonnegativity of coefficients)
yields a contradiction. Thus no one--variable section can be continuous on any tail of $I$; in particular,
each such section has infinitely many points of discontinuity.

\medskip
\noindent\textbf{(8) The case $\sum_{i=1}^n\alpha_i<1$.}
Repeat the bounded-interval version from \textbf{(8)} of the two--variable proof:
take $I_0=[\tfrac12,1]$, construct $H_0\subset I_0$, define $H$, remove right endpoints to get $H'$,
choose a strictly increasing bijection $f:I\to H'$ with $I=(0,1)$, and define $F$ as in \textbf{(5)}.
The proof of \textbf{(7)} applies verbatim, with ``large'' replaced by ``small''.
\end{proof}

\begin{rem}[Symmetric noncontinuous examples in the $n$--ary case]
In the proof of Theorem~\ref{thm:n-ary-counterexample} the symmetry assumptions
are irrelevant: nowhere in the argument is the equality of the coefficients
$\alpha_1,\dots,\alpha_n$ used.
Consequently, if one chooses $\alpha_1=\cdots=\alpha_n>0$ with
$\sum_{i=1}^n \alpha_i\neq 1$, then the resulting operation
\[
  F(x_1,\dots,x_n)=f^{-1}\!\left(\sum_{i=1}^n \alpha_i f(x_i)\right)
\]
is symmetric in all variables and still provides a counterexample.
Thus discontinuity persists even under the full symmetry requirement.
\end{rem}

\begin{cor}\label{cor:assoc-n-ary}
There exist associative, partially strictly increasing $n$--ary operations
$F:I^n\to I$ which are not continuous.
\end{cor}

\begin{proof}
By the weighted quasi--sum calculus, \eqref{eq:multivar-def} is associative
iff $\alpha_1=\dots=\alpha_n=1$.
Taking these weights in Theorem~\ref{thm:n-ary-counterexample} yields the claim.
\end{proof}

\section{Local reflexivity at two points and continuity}\label{sec:two-point-refl}

Although reflexivity at a single point does not force continuity, as we have
seen in Remark~\ref{rem:onepoint}, a much stronger rigidity phenomenon
appears when reflexivity is available at two distinct points.
In this case, continuity is automatically restored on the whole interval
between these points.
The proof below is an adaptation of the arguments of
\cite[Theorem~8]{BKSZ2021} and \cite[Lemma~6]{BKSZ2023}, rewritten here in full
detail to keep the present paper self-contained.

\begin{thm}[Reflexivity at two points]\label{T:refl2p}
Let $I$ be an interval and let $F:I^2\to I$ be bisymmetric, symmetric,
and partially strictly increasing in each variable.
Assume that there exist $a,b\in I$, $a<b$, such that
\[
  F(a,a)=a,\qquad F(b,b)=b.
\]
Then $F$ is reflexive and continuous on $[a,b]$.
In particular, there exists a continuous, strictly monotone function
$\phi:[a,b]\to\mathbb{R}$ such that
\[
  F(x,y)=\phi^{-1}\!\left(\frac{\phi(x)+\phi(y)}{2}\right)
  \qquad (x,y\in[a,b]),
\]
that is, $F$ is a quasi-arithmetic mean on $[a,b]$.
\end{thm}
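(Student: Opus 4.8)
The plan is to leverage the two reflexive points to produce a single symmetric pair of distinct points, and then invoke the established one-symmetric-pair theorem after reducing to the reflexive setting. First I would observe that since $F$ is symmetric, we automatically have $F(a,b)=F(b,a)$, so the symmetry-at-one-pair hypothesis of Theorem~\ref{thm:one-symmetric-pair} is satisfied for the distinct points $a,b$. The obstacle is that Theorem~\ref{thm:one-symmetric-pair} requires \emph{reflexivity} on the whole interval as a standing hypothesis, whereas here we are only given reflexivity at the two endpoints $a$ and $b$. So the real content is to \emph{propagate} reflexivity from the two points $a,b$ to the entire segment $[a,b]$, and this is where the bulk of the work lies.

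To propagate reflexivity, I would first show that $F$ maps $[a,b]^2$ into $[a,b]$: since $F$ is partially strictly increasing and symmetric, for $x,y\in[a,b]$ we have $F(a,a)=a\le F(x,y)\le F(b,b)=b$, so $[a,b]$ is invariant under $F$ and we may restrict attention to the compact interval $J:=[a,b]$. Next, the key idea is to use bisymmetry together with $F(a,a)=a$ and $F(b,b)=b$ to locate further fixed points. The natural candidate is the ``midpoint'' $c:=F(a,b)$; I would try to show $F(c,c)=c$. Applying bisymmetry to the quadruple $(a,b,a,b)$ gives
\[
  F\bigl(F(a,b),F(a,b)\bigr)=F\bigl(F(a,a),F(b,b)\bigr)=F(a,b)=c,
\]
so indeed $F(c,c)=c$, producing a third reflexive point strictly between $a$ and $b$ (strictness follows from strict monotonicity, since $a<b$ forces $a<c<b$). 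Iterating this bisymmetric bisection argument on the subintervals $[a,c]$ and $[c,b]$ yields reflexive points on a dense set of dyadic-type values in $[a,b]$.

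The main obstacle will be upgrading reflexivity on this dense set to reflexivity (and continuity) on all of $[a,b]$. Here I would exploit the monotonicity structure: the diagonal function $d(x):=F(x,x)$ is strictly increasing (being a composition of the partially strictly increasing sections), satisfies $d(a)=a$, $d(b)=b$, and equals the identity on the dense set of bisection points just constructed. A strictly increasing function that agrees with the identity on a dense subset of $[a,b]$ must equal the identity everywhere it is defined, provided we can rule out jumps; since $a\le d(x)\le b$ and $d$ agrees with the identity densely, monotonicity sandwiches $d(x)$ between nearby dense values $d(p)=p$ and $d(q)=q$ with $p\le x\le q$, forcing $d(x)=x$. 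This establishes full reflexivity of $F$ on $[a,b]$. With reflexivity now in hand on the whole interval $J=[a,b]$, together with bisymmetry, symmetry, partial strict monotonicity, and the already-noted symmetric pair, I would apply Theorem~\ref{thm:one-symmetric-pair} (or directly Theorem~\ref{T:bisymmetryimpliescontinuity}, which needs exactly reflexivity, partial strict monotonicity, symmetry, and bisymmetry) to conclude that $F$ is continuous on $[a,b]$ and of quasi-arithmetic form $F(x,y)=\phi^{-1}\bigl(\tfrac{\phi(x)+\phi(y)}{2}\bigr)$ for a continuous strictly monotone $\phi$, as claimed.
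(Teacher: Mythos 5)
There is a genuine gap at the single most important step. Your propagation of reflexivity to the iterated ``bisection points'' via bisymmetry, namely
\[
  F\bigl(F(a,b),F(a,b)\bigr)=F\bigl(F(a,a),F(b,b)\bigr)=F(a,b),
\]
is correct and is exactly step (2) of the paper's proof (applied to the set $W_\infty$ of all words in $a,b$ under $F$). But you then assert that iterating this on $[a,c]$ and $[c,b]$ ``yields reflexive points on a dense set of dyadic-type values in $[a,b]$,'' and this is precisely the hard part of the theorem, not a freebie. The set $W_\infty$ of bisection points is countable and order-dense in itself, but there is no a priori reason it is \emph{topologically} dense in $[a,b]$: without continuity (which is what you are trying to prove), $F(a,b)$ could lie very close to $a$, and the whole family of iterated words could accumulate on a nowhere dense subset, leaving a gap $(X,Y)\subset[a,b]$ containing no reflexive point. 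In that case your sandwiching argument for the diagonal $d(x)=F(x,x)$ only gives $d(x)\in[X,Y]$ for $x\in(X,Y)$, which does not force $d(x)=x$; a strictly increasing $d$ with $d(a)=a$, $d(b)=b$ that fixes a non-dense set need not be the identity. The paper's proof devotes its step (4) to exactly this density claim, and the argument there is nontrivial: it assumes a gap exists, uses the dyadic parametrization $f_0:\mathcal D\to W_\infty$ and the uncountable set of two-sided accumulation points to manufacture uncountably many pairwise disjoint nondegenerate open subintervals of $[a,b]$, a contradiction. Some argument of this kind is indispensable in your write-up.

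Aside from this, your overall architecture is a legitimate alternative to the paper's: rather than re-running the Acz\'el/BKSZ construction of the generator $\phi$ from scratch on $[a,b]$ (as the paper does in its steps (3)--(5)), you would first establish full reflexivity on $[a,b]$ (via density of fixed points plus the monotone-sandwich argument for $d$) and then invoke Theorem~\ref{T:bisymmetryimpliescontinuity} as a black box on the invariant interval $[a,b]$; your preliminary observation that $F([a,b]^2)\subseteq[a,b]$ makes that invocation legal. This reduction is cleaner and shorter \emph{granted} the density of $W_\infty$ --- but since proving that density requires essentially the same dyadic machinery the paper builds anyway, the saving is smaller than it appears. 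As written, the proposal is incomplete.
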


\begin{proof}
We follow the strategy of \cite{BKSZ2021}, but restricted to the interval
$[a,b]$ and using reflexivity only at the two endpoints.

\medskip
\noindent\textbf{(1) Words generated by $a$ and $b$.}
Set $W_0:=\{a,b\}$ and define inductively
\[
  W_{n+1}:=W_n\cup\{F(x,y):x,y\in W_n\},\qquad n\ge0,
\]
and
\[
  W_\infty:=\bigcup_{n\ge0}W_n.
\]
Thus, every element of $W_\infty$ is a finite ``word’’ obtained from $a$
and $b$ by iterating $F$ (for example $F(F(a,b),b)$,
$F(a,F(a,b))$, etc.).

By bisymmetry, symmetry, and strict monotonicity, we have $W_n\subset W_{n+1}$ for all $n\ge 0$.

\medskip
\noindent\textbf{(2) Propagating reflexivity along the words.}
We claim that every $w\in W_\infty$ is reflexive.

We argue by induction on $n$.
For $n=0$ we have $W_0=\{a,b\}$ and
\[
  F(a,a)=a,\qquad F(b,b)=b
\]
by assumption.
Assume that $F(w,w)=w$ for all $w\in W_n$.
Let $w\in W_{n+1}$.
If $w\in W_n$ there is nothing to prove.
Otherwise $w=F(x,y)$ with $x,y\in W_n$.
Then by bisymmetry and the inductive hypothesis,
\[
  F(w,w)
  =F(F(x,y),F(x,y))
  =F\big(F(x,x),F(y,y)\big)
  =F(x,y)
  =w.
\]
Thus $F(w,w)=w$ for all $w\in W_\infty$. In particular, we get that $W_n\subset W_{\infty}\subset[a,b]$.

\medskip
\noindent\textbf{(3) Dyadic rationals and the map on $W_\infty$.}
Let $\mathcal D$ denote the set of dyadic rationals in $[0,1]$,
\[
  \mathcal D
  :=\left\{\frac{k}{2^n}:n\in\mathbb N,\ 0\le k\le 2^n\right\}.
\]
We define a map $f_0:\mathcal D\to W_\infty$ recursively by
\[
  f_0(0):=a,\qquad f_0(1):=b,
\]
and
\begin{equation}\label{eq:dyadic-identity}
  f_0\!\left(\frac{r+s}{2}\right)
  :=F\big(f_0(r),f_0(s)\big)
  \qquad (r,s\in\mathcal D).
\end{equation}

A standard induction on the denominator (see
\cite[pp.~287–290]{Aczel1989} or \cite[Theorem~8]{BKSZ2021}) shows that:
\begin{itemize}
  \item $f_0$ is well-defined (the value does not depend on the way a
        dyadic is written as an average of two dyadics);
  \item $f_0$ is strictly increasing on $\mathcal D$;
  \item $f_0(\mathcal D)=W_\infty$;
  \item \eqref{eq:dyadic-identity} holds for all $d_1,d_2\in\mathcal D$:
        \[
          f_0\!\left(\frac{d_1+d_2}{2}\right)
          =F\big(f_0(d_1),f_0(d_2)\big).
        \]
\end{itemize}

\medskip
\noindent\textbf{(4) Density of $W_\infty$.}
Let $\overline{W_\infty}$ be the closure of $W_\infty$ in $[a,b]$ and
let $L$ denote the set of its two-sided accumulation points of $W_\infty$, i.e.\ those
$x\in\overline{W_\infty}$ such that for every $\varepsilon>0$,
\[
  (x-\varepsilon,x)\cap\overline{W_\infty}\neq\emptyset,
  \qquad
  (x,x+\varepsilon)\cap\overline{W_\infty}\neq\emptyset.
\]
Since $W_\infty$ is countable and nested inside $[a,b]$, the
set $L$ is uncountable.

Assume, for a contradiction, that $W_\infty$ is not dense in $[a,b]$.
Then there exist $X<Y$ in $[a,b]$ such that
\[
  (X,Y)\cap W_\infty=\emptyset.
\]
Equivalently, $f_0(\mathcal D)=W_\infty$ is not dense in $[a,b]$.

Following Aczél’s classical argument, there exists $z\in(0,1)$ and
sequences of dyadics
\[
  d_n\nearrow z,\qquad D_n\searrow z
\]
such that the one-sided limits
\[
  X:=\lim_{n\to\infty}f_0(d_n),
  \qquad
  Y:=\lim_{n\to\infty}f_0(D_n)
\]
satisfy $X<Y$ and $(X,Y)\cap W_\infty=\emptyset$ (a ``gap'').

Now take two distinct points $s,t\in L$ with $s<t$.
Since both are two-sided accumulation points of $\overline{W_\infty}$,
there exist dyadics $r_s,r_t\in\mathcal D$ such that
\[
  s<f_0(r_s)<f_0(r_t)<t.
\]

Because $d_n\nearrow z$ and $D_n\searrow z$ while $r_s<r_t$, we have
\[
  (d_n+r_t)-(D_m+r_s)
  = (d_n-D_m)+(r_t-r_s)\to r_t-r_s>0,
\]
as $n,m\to\infty$.
Hence, for all sufficiently large $n,m\in \N$,
\begin{equation}\label{eq:midpoint-order}
  \frac{D_m+r_s}{2}
  <\frac{d_n+r_t}{2}.
\end{equation}
By monotonicity of $f_0$ on $\mathcal D$ and \eqref{eq:dyadic-identity}
this yields, for such $n,m\in \N$,
\begin{align*}
  F\big(f_0(d_n),f_0(r_s)\big)
    &= f_0\!\left(\frac{d_n+r_s}{2}\right)
     < f_0\!\left(\frac{D_m+r_s}{2}\right)
     = F\big(f_0(D_m),f_0(r_s)\big),\\[1ex]
  F\big(f_0(d_n),f_0(r_t)\big)
    &= f_0\!\left(\frac{d_n+r_t}{2}\right)
     > f_0\!\left(\frac{D_m+r_s}{2}\right)
     = F\big(f_0(D_m),f_0(r_s)\big).
\end{align*}
Combining these inequalities we obtain the strict chain
\[
  F\big(f_0(d_n),f_0(r_s)\big)
  < F\big(f_0(D_m),f_0(r_s)\big)
  < F\big(f_0(d_n),f_0(r_t)\big)
  < F\big(f_0(D_m),f_0(r_t)\big).
\]

By strict monotonicity in the second variable and the inequalities
$s<f_0(r_s)$, $f_0(r_t)<t$, we obtain, for all large $n,m$,
\[
  (F(f_0(d_n),s),F(f_0(D_m),s))
  \cap
  (F(f_0(d_n),t),F(f_0(D_m),t))
  =\emptyset.
\]

For fixed $s$, the sequence $F(f_0(d_n),s)$ is increasing in $n$, and
$F(f_0(D_m),s)$ is decreasing in $m$, by strict monotonicity in the
first variable.
Moreover,
\[
  f_0(d_n)\nearrow X,\qquad f_0(D_m)\searrow Y,
\]
and $F$ is increasing in each variable.
Hence
\[
  F(X,s)\ge \sup_n F(f_0(d_n),s),\qquad
  F(Y,s)\le \inf_m F(f_0(D_m),s),
\]
and similarly for $t$.
It follows that for the intervals
\[
  (F(X,s),F(Y,s)) ~\cap~(F(X,t),F(Y,t))=\emptyset.
\]

Because $L$ is uncountable, this construction produces uncountably many
pairwise disjoint nondegenerate open subintervals of $[a,b]$, which is
impossible.
This contradiction shows that $W_\infty=f_0(\mathcal D)$ must be dense in
$[a,b]$.

\medskip
\noindent\textbf{(5) Extending $f_0$ and quasi-arithmetic form.}
Since $f_0:\mathcal D\to[a,b]$ is strictly increasing and
$f_0(\mathcal D)=W_\infty$ is dense in $[a,b]$, there is a unique
strictly increasing extension
\[
  f:[0,1]\to[a,b]
\]
such that $f|_{\mathcal D}=f_0$.
Standard arguments show that $f$ is continuous and onto $[a,b]$.

Define $\phi:=f^{-1}:[a,b]\to[0,1]$, which is also continuous and
strictly increasing.
For $d_1,d_2\in\mathcal D$ we have, by \eqref{eq:dyadic-identity},
\[
  F\big(f(d_1),f(d_2)\big)
  =f\!\left(\frac{d_1+d_2}{2}\right)
  =\phi^{-1}\!\left(\frac{\phi(f(d_1))+\phi(f(d_2))}{2}\right),
\]
so on the dense set $f(\mathcal D)=W_\infty$ we already have the
quasi-arithmetic form
\[
  F(x,y)
  =\phi^{-1}\!\left(\frac{\phi(x)+\phi(y)}{2}\right).
\]

This identity extends to all $x,y\in[a,b]$ by continuity of the right-hand
side and monotonicity of $F$.
In particular, $F$ is continuous and reflexive on $[a,b]$.

This completes the proof.
\end{proof}

\section{Concluding remarks and open problems}\label{sec:conclusion}

The main result of this paper shows that \emph{bisymmetry together with strict monotonicity does not imply continuity}.
We constructed large classes of bisymmetric, partially strictly increasing operations
$F$, which are discontinuous on every section, even in the symmetric case.
In particular, outside the reflexive regime, bisymmetry does not improve
regularity beyond mere monotonicity of sections, and continuity cannot be
dropped from Acz\'el’s quasi-sum theorem (Theorem~\ref{Aczel3}(2)).
A general question can be asked.
\begin{que}
  How can one characterize bisymmetric, strictly increasing operations
  \(F:I^n\to I\) (\(n\in\mathbb{N}\))?
\end{que}

Analogously, we may ask the following.
\begin{que}
  How can one characterize associative, strictly increasing operations
  \(F:I^n\to I\) (\(n\in\mathbb{N}\))?
\end{que}

On the other hand, we proved that in the symmetric case \emph{two-point
reflexivity} (Theorem~\ref{T:refl2p}) forces continuity and a
quasi-arithmetic representation on the segment spanned by the two reflexive
points.  Together with \cite{BKSZ2021,BKSZ2023}, this highlights a striking
dichotomy: reflexivity combined with (even very weak) symmetry has a strong
regularizing effect, whereas in the non-reflexive regime, one can construct
wild bisymmetric operations.

\medskip
A central open problem in the area can be formulated as follows.

\begin{que}[Global problem on continuity]\label{que:global}
Is every \emph{reflexive}, bisymmetric, partially strictly increasingbinary
operation $F:I^2\to I$ necessarily continuous?
Equivalently, does reflexivity together with bisymmetry and strict
monotonicity already force $F$ to be weighted quasi-arithmetic?
\end{que}

This is, in our view, the most important question raised by the earlier \cite{BKSZ2021,BKSZ2023} and present
works.  Without reflexivity, discontinuous examples exist by
Theorem~\ref{thm:alphabeta-counterexample}.
In the reflexive case, global symmetry is not available in general
\cite{BKSZ2023}, so standard symmetrization techniques cannot be applied.

The multivariate situation is even less understood: even for $n$-ary
bisymmetric, symmetric, partially strictly increasing, and reflexive operations, continuity is
known only in the special cases $n=2^k$, while the general (non-symmetric) case remains
open.  A positive answer to Question~\ref{que:global} in the two-variable
case would likely play a central role in approaching the multivariate
theory as well.

\medskip
Our two-point reflexivity theorem raises several further questions about
how local regularity can propagate.

\begin{que}[Extension of Theorem~\ref{T:refl2p}]\label{que:extend}
Let $F:I^2\to I$ be bisymmetric, symmetric, and strictly increasing.
If $F(a,a)=a$ and $F(b,b)=b$ for some $a<b$, must $F$ be continuous on
the \emph{entire} interval $I$ where it is defined, rather than just on
$[a,b]$, even if it is not reflexive everywhere?
\end{que}

However, maybe more can be stated. A related issue is whether reflexivity on a subinterval must already
force reflexivity — and hence continuity — on the whole~$I$.

\begin{que}[From local reflexivity]\label{que:local-reflex}
Can a bisymmetric, symmetric, partially strictly increasing operation be
reflexive only on a proper subinterval of~$I$?
\end{que}

If the answer is negative, then such an operation must be continuous (and hence quasi-arithmetic mean)
on the whole domain $I$.

Finally, the phenomenon of \emph{one-point reflexivity} remains poorly
understood.  Our construction in the associative case
($\alpha=\beta=1$, Remark~\ref{rem:onepoint}) shows that a single reflexive
point can occur at a boundary point introduced as a neutral element,
while the operation remains discontinuous inside~$I$.

\begin{que}[One-point reflexivity]\label{que:onepoint}
In the associative or in the general bisymmetric setting, can a
discontinuous partially strictly increasing operation $F:I^2\to I$ be reflexive at
an \emph{interior} point $c\in I$?
Does one-point interior reflexivity force any local continuity?
\end{que}

We believe that further progress on these problems will require a deeper
understanding of the fine structure of bisymmetric operations beyond the
quasi-sum setting, and in particular, of the interaction between algebraic
properties (bisymmetry, associativity, symmetry, reflexivity) and
order-theoretic regularity.

\section*{Acknowledgement}
The author is grateful to Miklós Laczkovich for inspiring conversations.
The author was supported by the Hungarian National Foundation for Scientific Research Grants No. K146922, STARTING 150576, FK 142993.

\noindent
		{\sc Gergely Kiss:}\\
        Corvinus University of Budapest, Department of Mathematics \\
		Fővám tér 13-15, Budapest 1093, Hungary,\\
        and\\
		HUN-REN Alfr\'ed R\'enyi Mathematical Institute\\
		Re\'altanoda utca 13-15, H-1053, Budapest, Hungary\\
		E-mail: {\tt kiss.gergely@renyi.hu}

\end{document}